\newtheorem{theorem}{Theorem}[subsection]
\newtheorem{conjecture}{Conjecture}
\newtheorem{definition}[theorem]{Definition}
\newtheorem{example}[theorem]{Example}
\newtheorem{lemma}[theorem]{Lemma}
\newtheorem{proposition}[theorem]{Proposition}
\newtheorem{remark}{Remark}
\numberwithin{equation}{section}
\def\qed{\hfill {\hbox{${\vcenter{\vbox{             
   \hrule height 0.4pt\hbox{\vrule width 0.4pt height 6pt
   \kern5pt\vrule width 0.4pt}\hrule height 0.4pt}}}$}}}
\newenvironment{proof}[1][Proof]{\smallskip\noindent{\bf #1.}\quad}
{\qed\par\medskip}
\title{Column expansion identities and quadratic spanning forest identities}
\author{Melanie Fraser and Karen Yeats\footnote{KY is supported by an NSERC Discovery grant and the Canada Research Chair program.}}
\begin{document}
\maketitle

\begin{abstract}
    Column expansion identities of determinants give a source of quadratic spanning forest polynomial identities and allow us determine the dimension of the space of certain quadratic spanning forest identities, settling a conjecture of the second author with Vlasev from 2012.  Furthermore, we give a combinatorial interpretation of such spanning forest identities via an edge-swapping argument previously developed by the first author in 2019.  Quadratic spanning forest polynomials identities are of particular interest because they are useful for quantum field theory calculations in four dimensions.
\end{abstract}

\section{Introduction}

\subsection{Motivation}

The classical Dodgson identity for a square matrix $M$ with at least 2 rows and columns is 
\[
\det M \det M_{\{1,2\},\{1,2\}} = \det M_{\{1\}, \{1\}} \det M_{\{2\}, \{2\}} - \det M_{\{1\}, \{2\}} \det M_{\{2\}, \{1\}}
\]
where $M_{I,J}$ is the matrix $M$ with the rows indexed by $I$ removed and the columns indexed by $J$ removed.  This formula was made popular by Dodgson in his condensation algorithm \cite{Dodgson}.

When the matrix $M$ is not just any matrix but is a matrix determined by a graph, such as the Laplacian matrix of a graph or other closely related graph matrices, then the Dodgson identity can be interpreted as an identity of spanning forest polynomials of graphs.  This has been observed from a few different directions.  

From the point of view of algebraic geometry and quantum field theory, Francis Brown \cite{Brbig} interpreted the Dodgson identity in terms of Dodgson polynomials -- minors of a version of the graph Laplacian with variables marking the contributions of the different edges, which the second author with Brown \cite{brown} subsequently interpreted as sums of signed spanning forest polynomials.

From the point of view of pure combinatorics and algorithms, the first author, in \cite{fraser}, provided a combinatorial proof of the Dodgson identity by interpreting the Dodgson identity as a quadratic spanning forest identity through the application of the generalized matrix tree theorem.

In both of these cases, in interpreting the classical Dodgson identity, the corresponding spanning forest identities are based off of 3 marked vertices in the graph.  Furthermore, the Dodgson identity is quadratic in the determinants and hence also quadratic in spanning forest polynomials in this interpretation.  However, the usual determinant identities generalizing the classical Dodgson identity are of higher degree.  Naturally interpreting these generalized determinant identities graph theoretically, one obtains spanning forest identities which are also of higher degree in the spanning forest polynomials (see \cite{Brbig, dennis}), while also being based off more than 3 marked vertices.

The quantum field theoretical motivation for studying such determinant and spanning forest identities comes from parametric Feynman integration of some integrals known as Feynman periods in a scalar field theory in 4 dimensions (see \cite{Brbig, Sphi4}).  The expressions for the denominators when integrating one edge at a time are quadratic in the Dodgson or spanning forest polynomials, but often involve more than three marked vertices.  

Consequently, Vlasev along with the second author was interested in finding quadratic 4-vertex spanning forest identities.  In \cite{vlasev} we found the most general such identity possible.  The identity itself is laid out in Section~\ref{subsec 4 vert}.  This identity deals with four marked vertices, and was discovered through a computer program and proved through a somewhat blind manipulation of non-quadratic identities. 

In this paper we will present a more natural and combinatorial derivation of Vlasev and the second author's 4-vertex spanning forest identity, and will generalize to quadratic spanning forest identities on $m$-marked vertices. Note that in Vlasev and the second author's original paper, they denoted the number of marked vertices by $n$. Because most graph theorists understand $n$ to be the total number of vertices in a graph, we will refer to the number of marked vertices as $m$ instead. We hope that this will clarify that the number of marked vertices can be less than the total number of vertices.

As we move up to more marked vertices, there are more possible ways to partition the marked vertices between the trees of the forest.  The classical Dodgson identity, when interpreted in terms of spanning forest polynomials, gives an identity in which the left hand side consists of pairs of a spanning tree and a spanning forest with each marked vertex in a different tree. The right hand side consists of particular pairs of spanning forests with the three marked vertices split between two trees. The 4-vertex identity of \cite{vlasev} relates certain pairs of spanning forests where the left hand side of the identity consists of pairs of a spanning tree and a spanning forest with each marked vertex in a different tree. The right hand side of the identity consists of certain pairs of spanning forests, one of which partitions the 4 marked vertices between two trees and the second of which partitions the 4 marked vertices between three trees. In our generalization we will consider the $m$-vertex identities relating pairs of spanning forests where the left hand side of the identity consists of pairs of a spanning tree and a spanning forest with each of the $m$ marked vertices in a different tree. The right hand side consists of certain pairs of spanning forests, one of which partitions the $m$ marked vertices between two trees and the second of which partitions the $m$ marked vertices between $m-1$ trees.

Vlasev and the second author's 4-vertex identity involves $8$ free variables, and they conjectured at the end of their paper that an $m$-vertex identity of the type outlined above would involve $m(m-2)$ free variables: 

\begin{conjecture}\label{VYConj}
The formulae for quadratic spanning forest identities of the type outlined above and described rigorously in Conjecture~\ref{conj rephrased} on $m$ marked vertices have $m(m-2)$ free variables.
\end{conjecture}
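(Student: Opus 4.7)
The plan is to settle the conjecture by establishing matching lower and upper bounds on the dimension of the space of quadratic $m$-marked-vertex spanning forest identities of the prescribed shape. For the lower bound I will exhibit an explicit $m(m-2)$-parameter family of identities arising from column expansions of minors of the marked Laplacian, using the generalized matrix tree theorem as in~\cite{brown,fraser} to translate determinantal identities into spanning forest identities. For the upper bound I will show that no further independent identities exist by analyzing the linear system that a candidate identity must satisfy when it is required to hold universally as a polynomial in the edge variables.

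The first step is to fix the matrix framework. Starting from the Laplacian of an arbitrary graph with $m$ marked vertices, an appropriate restriction produces a square block whose minors compute signed sums of spanning forest polynomials, where the marked vertex partitioning is dictated by the rows and columns removed. Quadratic identities among such minors then correspond, via the generalized matrix tree theorem, to quadratic spanning forest identities of precisely the shape described in Conjecture~\ref{conj rephrased}, so it suffices to work entirely on the determinantal side.

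For the lower bound I will expand the determinant of this block along each of its columns: each column expansion writes the determinant as an alternating sum of smaller minors. Multiplying each expansion by a suitably chosen complementary minor converts the linear identity into a quadratic identity whose left-hand side is a spanning tree times an $m$-part forest and whose right-hand side is a sum of $2$-tree-forest times $(m-1)$-tree-forest products, exactly matching the prescribed shape. Varying the column of expansion and the complementary minor, and accounting for the obvious determinantal symmetries and for the overall scaling, produces $m(m-2)$ candidate identities. I expect to verify their linear independence by inspecting supports on a basis of spanning forest pairs (each column expansion contributes a distinguished summand that no other expansion touches), or, failing that, by evaluation on a family of small test graphs chosen to separate the summands.

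The main obstacle, as anticipated, will be the upper bound. For this I would parameterize an arbitrary identity of the prescribed shape as a coefficient vector indexed by admissible spanning forest pairs on both sides, and impose that the resulting polynomial vanishes identically in the edge variables. Evaluating on appropriate families of graphs obtained by contracting or deleting edges yields linear constraints on the coefficients, and the heart of the proof will be showing that the rank of this constraint system exactly pins the solution space at dimension $m(m-2)$, not merely at most that value. The edge-swapping framework developed by the first author in~\cite{fraser} should provide the combinatorial bookkeeping needed to match each coefficient in the column-expansion construction with a specific edge-flip rule, thereby confirming that the column expansions already exhaust the full space of identities and completing the proof.
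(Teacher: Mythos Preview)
Your overall architecture matches the paper: a lower bound from column-expansion determinantal identities translated through the all-minors matrix tree theorem, and an upper bound showing these exhaust the space. But both halves diverge from the paper in ways that matter, and the upper-bound half contains a real gap.

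\textbf{Lower bound.} You aim to produce $m(m-2)$ identities directly by ``varying the column of expansion and the complementary minor'' and then checking independence by support inspection or test-graph evaluation. The paper instead produces $m(m-1)$ identities: for each of the $m$ choices of column $c$ removed from the Laplacian (the row being irrelevant, Lemma~\ref{row}) and each of the $m-1$ column expansions $j$, one gets $\mathcal{L}_{m,c}(j)$. These fall into $m$ blocks of size $m-1$, and the key structural fact (Theorem~\ref{monomial}) is that every block has the \emph{same} sum, namely one copy of each non-forbidden $AB$ pair. This gives $m-1$ linear relations; together with homogenization that is exactly the drop from $m(m-1)$ to $m(m-2)$. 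Independence of the remaining $m(m-2)$ differences (Proposition~\ref{prop lin ind}) is then proved by a delicate argument tracking which permissible monomials $A_{j,\ell}B$ can appear in which $L_{m,c}(j)$, not by evaluation on test graphs. Your ``distinguished summand'' idea is in the right spirit but you have not identified the block structure that actually accounts for the count.

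\textbf{Upper bound.} Here is the genuine gap. Your plan is to impose universal vanishing in the edge variables and evaluate on families of graphs obtained by contraction/deletion until the rank of the constraint system is pinned at $m(m-2)$. That is precisely the ad-hoc strategy used in \cite{vlasev} for $m=4$, and the paper explicitly notes that it ``does not readily generalize as the determinantal manipulations and the large explicit graphs used there were ad-hoc.'' The paper's actual argument (Theorem~\ref{finaltheorem}) is an \emph{induction on $m$}: scale every edge at a marked vertex $i$ by a parameter $t$, extract the part linear in $t$, and reinterpret on the graph $H_i$ with $i$ removed. This collapses an $m$-vertex $AB$ identity to an $(m-1)$-vertex one, which by induction is a combination of $L$'s; lifting back to $G$ and subtracting half leaves only controlled differences $A_{\neq i}(B^{P_3i,P_4}-B^{P_3,P_4i})$. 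Repeating with two further vertices $x,y$ and invoking Lemmas~\ref{ispecial} and~\ref{zerocoeff} forces all remaining coefficients to vanish. None of this machinery appears in your proposal, and the edge-swapping framework of \cite{fraser} you invoke is used in the paper only to give a bijective interpretation of the column expansion identities themselves (Section~\ref{subsec combi}), not for the spanning step.
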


This paper is organized as follows: in the remainder of section 1 we first briefly describe the quantum field theory motivation for quadratic spanning forest identities and then will give necessary notation for the remainder of the paper.  The first of these subsections can be skipped by the reader who is not interested in physics motivation, but subsection~\ref{subsec set up} should not be skipped. In section 2, we will state and prove a set of column expansion identities, and provide a combinatorial interpretation of them. In section 3, we will rephrase Vlasev and the second author's conjecture, use the column expansion identities to build our quadratic spanning forest identities, and from there prove the conjecture.

\subsection{Quantum field theory motivation}
Broadly, quantum field theory is the study of quantum interacting properties.  In perturbative quantum field theory one studies particle interactions by series expansions in some small parameter, often the \emph{coupling} associated with the interaction vertices.  One important family of such expansions are Feynman diagram expansions where the expansion is indexed by certain graphs known as \emph{Feynman diagrams}.  Each Feynman diagram contributes an integral to the expansion, known as the \emph{Feynman integral}.

Computing Feynman integrals is important for high energy physics calculations, for instance of scattering processes at CERN.  Depending on the techniques applied, computing Feynman integrals can have substantial combinatorial aspects.  Francis Brown \cite{Brbig} initiated an approach for integrating Feynman integrals in parametric form, followed up and extended by others such as \cite{Phyp, Bmpl}.  The key ideas of this approach can be seen in the example to which it was initially applied: computing the period of suitably nice scalar Feynman diagrams.  The period is an important residue of the Feynman integral.

To sketch the approach briefly, given a graph $G$ define the Kirchhoff polynomial $\Psi_{G}=\sum_{T}\prod_{e\not\in T}a_e$ where the sum is over spanning trees of $G$.  Then define the period to be
\[
P_G = \int_{a_e\geq 0} \frac{da_2\cdots da_{|E|}}{\Psi_G^2|_{a_1=1}}.
\]
This is but one of many equivalent forms, see \cite{Sphi4}, and converges for sufficiently nice graphs.  Integrating this expression one edge at a time lets us consider the form of the numerator and denominator at each step leaving the substitution $a_1=1$ to the end.  After integrating one edge, say $a_2$, the integrand is  $1/\Psi_{G\backslash 2}\Psi_{G/2}$.  After integrating a second edge the numerator involves logarithms of remaining variables while the denominator is $\Psi_{G\backslash 23}\Psi_{G/23} - \Psi_{G\backslash 2/3}\Psi_{G\backslash 3/2}$, which is amenable to applying the Dodgson identity resulting in a polynomial which is a square of a sum of spanning forest polynomials of the type described in the next section.  The next step can also be explicitly defined in terms of spanning forest polynomials, see \cite{brown}, and the following one, the numerators moving from logarithms to dilogarithms to trilogarithms.  After that the algorithm only continues when the denominator factors.

The details are not important for the present purposes, but what is important is that these denominators are quadratic expressions in spanning forest polynomials, and that identities of quadratic expressions in spanning forest polynomials  generalizing the Dodgson identity are useful for simplifying them and hence better understanding the behaviour of this algorithm.

\subsection{Set up and notation}\label{subsec set up}

The spanning forest polynomials we consider are of the following form.  Given a graph $G$ and a set partition $P$ of a subset of the vertices of $G$, associate a variable $a_e$ to each edge $e$ of $G$.  Then the spanning forest polynomial associated to $G$ and $P$ is 
\[
    \sum_{F\sim P} \prod_{e\not\in F}a_e 
\]
where the sum is over spanning forests $F$ of $G$ where there is a bijection between the trees of the forest and the parts of $P$, such that each vertex in $P$ is in the corresponding tree of $F$.  Note that isolated vertices are allowed as components in our spanning forests.

\begin{figure}\label{triangle}
    \centering
    \includegraphics{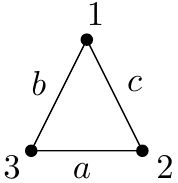}
    \caption{Triangle graph.}
    \label{fig triangle}
\end{figure}

\begin{example}\label{eg triangle}
For example, consider the triangle graph labelled as illustrated in Figure~\ref{fig triangle}.  The spanning forest polynomial associated to the vertex partition $\{1\}, \{2,3\}$ is $bc$ since edges $b$ and $c$ connect vertices in different parts of the partition and so cannot be in any spanning forests compatible with this set partition, while edge $a$ must be the spanning forest as it is the only remaining way to connect vertices $2$ and $3$.  As a second example, the spanning forest polynomial associated to $\{2\}, \{3\}$ is $a(b+c)$ because edge $a$ cannot be in any spanning forest compatible with this partition, while exactly one of edge $b$ or $c$ must be in the spanning forest in order to neither isolate vertex 1 nor connect vertices 2 and 3.
\end{example}

Note that spanning forest polynomials are linear in each variable by definition or to say this another way, every monomial making up the polynomial is squarefree. 

We will provide notation for these spanning forest polynomials that is suited to our needs.  At any given time we will be considering a fixed graph and a fixed set of marked vertices in that graph.  The spanning forests of interest will be spanning forests associated to that graph and to set partitions of the set of marked vertices, or a subset of the set of marked vertices.
This notation follows that of Vlasev and the second author \cite{vlasev}, and we will elaborate in further details on the 4-vertex case, since that is the case which appears there and which serves as a prototype for us.

Fix a graph $G$.
\begin{definition}\label{def partition}
Let $v_1,v_2,\ldots ,v_m$ be $m$ distinct marked vertices. A set partition of a subset of $\{v_1, v_2, \ldots, v_m\}$ will be denoted $(p_1,p_2,\ldots,p_m)$ where $p_i\in\{1,2,3,\dots,m,-\}$. If $p_i=-$ then $v_i$ is not in the subset being partitioned. If $p_i\neq -$, then $v_i$ belongs to the part $p_i$.

In an abuse of notation, $(p_1, p_2,\ldots, p_m)$ also denotes the spanning forest polynomial associated to $G$ and the set partition $(p_1, p_2, \ldots, p_m)$.  
\end{definition}
Since we use such partitions exclusively to index spanning forest polynomials, this notational conflation of the index and the object which is indexed will cause no confusion and will in fact be very handy. 

\begin{example}
Continuing Example~\ref{eg triangle}, the two spanning forest polynomials mentioned explicitly for the triangle graph would be written $(1,2,2)$ and $(-,1,2)$ if we take all three vertices to be marked and take them in the order given by their labels in Figure~\ref{fig triangle}.
\end{example}

\begin{example}
Suppose we have the partition $(1,1,2,-)$. Then vertices $1$ and $2$ both belong to the same part, which is a distinct part from vertex $3$. Vertex $4$ is not in the subset being partitioned. From the perspective of spanning forests, this indicates there are two trees, one containing vertices $1$ and $2$, and one containing vertex $3$.  Vertex $4$ can be in either tree.
\end{example}

There are a few further things to note about Definition~\ref{def partition}.  First, our set partitions do not have ordered parts, so $(1,1,2,-)$ and $(2,2,1,-)$ indicate the same set partition.  Second, if a vertex is not in the subset being partitioned then in the spanning forest corresponding to the set partition, that vertex can appear in any of the trees.  This implies that, for example,
\[
(1,1,2,-) = (1,1,2,1) + (1,1,2,2)
\]
(as spanning forest polynomials).  In general, with $m$ marked vertices 
\[
(p_1, p_2, \ldots, p_{i-1}, -,p_{i+1},  \ldots, p_m) = \sum_{j\in\{p_1,p_2,\ldots,p_m\}} (p_1, p_2, \ldots, p_{i-1}, j, p_{i+1}, \ldots, p_m)
\]
for the same reason, where the sum runs over the distinct values taken on by the $p_j$.

Vlasev and the second author gave names to specific partitions with 4 marked vertices to more concisely write their identity, which we will reproduce here. Other than $(1,1,1,1)$ and $(1,2,3,4)$, the partitions can be grouped into a set in which each partition has three parts (which we will label with $A$s) and a set in which each partition has two parts (which we will label with $B$s). They are

$$\begin{array}{ccc}
    A_1=(1,1,2,3) & A_2=(1,2,1,3) & A_3=(1,2,2,3) \\
    A_4=(1,2,3,1) & A_5=(1,2,3,2) & A_6=(1,2,3,3)
\end{array}
$$

$$\begin{array}{ccc}
    B_1=(1,1,1,2) & B_2=(1,1,2,1) & B_3=(1,2,1,1) \\
    B_4=(1,2,2,2) & B_5=(1,1,2,2) & B_6=(1,2,1,2) \\
    & B_7=(1,2,2,1) &
\end{array}
$$

For $m$ marked vertices, we will likewise write $\{A_i\}$ for the partitions with $m-1$ parts, and $\{B_j\}$ for the partitions with $2$ parts.  However, we will not fix an indexing of these two classes of partitions, but will in later sections notate an $A$ partition by indicating the two indices which form the part of size 2.

In addition to partition notation, we will also need some matrix notation, since our quadratic forest identities will be derived from column expansion identities, which involve determinants of matrices.

\begin{definition}
Let $U$ and $W$ be sets of integers of the same size, and let $M$ be a matrix. Then $M_{U,W}$ is the submatrix of $M$ with the rows corresponding to $U$ removed and the columns corresponding to $W$ removed.
\end{definition}

\begin{example}
The matrix $M_{\{1,2\},\{1,3\}}$ is the matrix $M$ with rows $1$ and $2$ removed and columns $1$ and $3$ removed. As a shorthand, we sometimes drop the set notation in the subscripts when the context is clear. Our shorthand for this example would be $M_{12,13}$.
\end{example}

For the purposes of this paper, we will be looking at undirected graphs. The generalized matrix tree theorem that we will be using to move between determinantal identities and spanning forest identities uses directed graphs in which the rows removed from the Laplacian indicate the roots of trees \cite{chaiken}. This can be easily adapted to undirected graphs by focusing on which vertices are grouped together in a tree instead of where that tree is rooted.

\section{Column Expansion Identities}

We will be using a set of column expansion identities to find and prove our quadratic spanning forest identities. These identities can be thought of as expanding along a specific column in a $k\times k$ cofactor matrix. As such, they come in groups of $k$ identities, one for each possible column to expand along. We will go into more detail later in this section about the connection between these column expansion identities and the quadratic forest identities. For now, let us look at the column expansion identities in their own right.

We will first consider the case with $4$ marked vertices before proceeding to the general case where we will give more formal statements and proofs.  The column expansion identities will lead to quadratic spanning forest polynomial identities of the type of interest to us.  In finding quadratic spanning forest identities on $4$ marked vertices specifically (to compare with Vlasev and the second author), we will need to look at a set of $3$ column expansion identities: 
  
\begin{enumerate}
    \item $\det(M)\det(M_{123,123})=\det(M_{1,1})\det(M_{23,23})-\det(M_{2,1})\det(M_{13,23})+\det(M_{3,1})\det(M_{12,23})$
    \item $\det(M)\det(M_{123,123})=-\det(M_{1,2})\det(M_{23,13})+\det(M_{2,2})\det(M_{13,13})-\det(M_{3,2})\det(M_{12,13})$
    \item $\det(M)\det(M_{123,123})=\det(M_{1,3})\det(M_{23,12})-\det(M_{2,3})\det(M_{13,12})+\det(M_{3,3})\det(M_{12,12})$
\end{enumerate}

Here, $M$ is any square matrix with at least three rows.  
The first identity in the list corresponds to expanding along the first column, the second to expanding along the second column, and the final identity to expanding along the third column.  These identities are perhaps not as well-known as they should be and were pointed out to us by Peter Doyle.

\subsection{Column expansion identities for general $k$}

We will now give a generalization that works for any $k$ where, when $k=3$, we recover the three identities from the previous section.  When we later interpret these in terms of spanning forest polynomials, we will have that the number of marked vertices is $m=k+1$.  Here, $[k]=\{1,2,\dots,k\}$. We will begin with a few definitions.

\begin{definition}
Let $S_k$ be the set of permutations on $[k]$. We will use an adjusted version of $S_{k-1}$ to be the set of bijections from $[k]\setminus\{i\}$ to $[k]\setminus\{j\}$ for fixed elements $i$ and $j$. If $\sigma'\in S_{k-1}$, then we define $\sigma\in S_k$ to be the extension of $\sigma'$ by $\sigma(i)=j$.
\end{definition}

\begin{definition}
We will also use an inversion vector for each permutation $\tau$. The $i$th element of the inversion vector for $\tau$ is the number of elements greater than $i$ to the left of $i$ in $\tau$. The number of inversions in $\tau$ (denoted $\iota(\tau)$) is the sum of elements in its inversion vector.
\end{definition}

Armed with these definitions, we can now state our first lemma.

\begin{lemma}\label{invsigns}
For a fixed $i$ and $j$, given $\sigma'\in S_{k-1}$ and its extension $\sigma\in S_k$ as defined above, $$(-1)^{\iota(\sigma)-(i+j)}=(-1)^{\iota(\sigma')}$$
\end{lemma}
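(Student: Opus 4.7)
The plan is to write $\iota(\sigma)$ as $\iota(\sigma')$ plus the inversions newly created by the insertion of $j$ at position $i$, then reduce that correction modulo $2$.

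First I would observe that $\sigma$ is obtained from $\sigma'$ simply by inserting the value $j$ into the $i$-th slot of the one-line sequence; the relative left-to-right order of every other entry is preserved. Consequently any inversion that does not involve the freshly inserted entry contributes the same way to $\iota(\sigma')$ and to $\iota(\sigma)$, and
\[
\iota(\sigma) = \iota(\sigma') + L + R,
\]
where $L = \#\{a < i : \sigma(a) > j\}$ counts the new inversions in which $j$ is the smaller element, and $R = \#\{a > i : \sigma(a) < j\}$ counts the new inversions in which $j$ is the larger element. It therefore suffices to show $L + R \equiv i + j \pmod{2}$.

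For the parity computation I would set up a small bookkeeping system. Put $L' = \#\{a < i : \sigma(a) < j\}$ and $R' = \#\{a > i : \sigma(a) > j\}$. Counting positions gives $L + L' = i - 1$ and $R + R' = k - i$, while counting values (using that $\sigma$ is a bijection on $[k]$ with $\sigma(i) = j$) gives $L + R' = k - j$ and $L' + R = j - 1$. Eliminating $L'$ and $R'$ from these four equations yields $R - L = j - i$, so
\[
L + R = 2L + (j - i) \equiv i + j \pmod{2},
\]
and raising $-1$ to this congruence yields the claimed identity.

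The argument is a direct combinatorial count and nothing in it is really difficult; the only place to be careful is the bookkeeping that separates new inversions from old ones and correctly matches position-tallies with value-tallies. Once that is pinned down, the parity of $L+R$ falls out of the four-equation linear system in one line.
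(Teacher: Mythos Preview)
Your proof is correct and follows essentially the same approach as the paper's: both arguments compute $\iota(\sigma)-\iota(\sigma')$ by isolating the inversions involving the inserted element and show this difference has the same parity as $i+j$. The paper organizes the count via the inversion vector and a single auxiliary parameter $\ell$ (obtaining $\iota(\sigma)-\iota(\sigma')=2\ell+(j-i)$), while you use the four-variable system $L,L',R,R'$ to reach $L+R=2L+(j-i)$; these are the same computation in slightly different bookkeeping.
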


\begin{proof}
Consider our inversion vector for $\sigma$. In order to get the inversion vector for $\sigma'$, we are removing the $i$th element of the inversion vector for $\sigma$. Let there be $\ell$ elements smaller than $i$ to the right of $i$ in $\sigma$. Then $i$ contributes $1$ to each of those $\ell$ elements in the inversion vector for $\sigma$. When we remove $i$ to get to $\sigma'$, each of those $\ell$ elements in the inversion vector for $\sigma'$ will be smaller by $1$. We also want to know what the $i$th element of the inversion vector for $\sigma$ is. There are $i-\ell-1$ elements smaller than $i$ to the left of $i$, and there are $j-1$ total elements to the left of $i$ (since by definition $\sigma(i)=j$), so there are $j-1-(i-\ell-1)=j-i+\ell$ elements greater than $i$ to the left of $i$. Thus $$\iota(\sigma')=\iota(\sigma)-\ell-(j-i+\ell)=\iota(\sigma)-2\ell-(j-i).$$
What we actually care about is the sign of these permutations, so we have $$(-1)^{\iota(\sigma')}=(-1)^{\iota(\sigma)-2\ell-(j-i)}=(-1)^{\iota(\sigma)-(i+j)}.$$
\end{proof}

We can now state the column expansion identities. Theorem \ref{columnexpansion} can be derived from
Theorem 1.7 of \cite{Williamson}, but we will give a proof that is
self-contained and leads into the combinatorial arguments of section 2.3. An even more direct proof can be given by expanding the second determinant on the right hand side using row expansion along row $i$ and then applying the identity $M \cdot \text{adj } M = \det M
\cdot I$. We thank Darij Grinberg for
pointing this out to us. 

\begin{theorem}\label{columnexpansion} \textbf{Column Expansion Identities.}
For any square matrix $M$ and any integer $k$, there are $k$ column expansion identities, one for each $j$ where $1\leq j\leq k$, of the form
$$\det(M)\det(M_{[k],[k]})=\sum_{i=1}^k(-1)^{i+j}\det(M_{i,j})\det(M_{[k]\setminus\{i\},[k]\setminus\{j\}}).$$
\end{theorem}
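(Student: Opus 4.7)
My plan is to follow the approach suggested by Grinberg in the excerpt: expand each inner determinant $\det(M_{[k]\setminus\{i\}, [k]\setminus\{j\}})$ on the right-hand side along its ``short'' row (the row indexed by $i \in [k]$), and then reassemble the result via standard Laplace cofactor expansion of $\det(M)$. This bypasses any combinatorial bijection between Leibniz expansions on the two sides.

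The matrix $M_{[k]\setminus\{i\}, [k]\setminus\{j\}}$ has row index set $\{i\}\cup\{k+1,\dots,n\}$ and column index set $\{j\}\cup\{k+1,\dots,n\}$. Expanding along the row indexed by $i$ yields
\[
\det(M_{[k]\setminus\{i\}, [k]\setminus\{j\}}) \;=\; M_{ij}\det(M_{[k],[k]}) \;+\; \sum_{l=k+1}^{n}(-1)^{l-k}M_{il}\det(E_l),
\]
where each minor $E_l$ (supported on rows $\{k+1,\dots,n\}$ and columns $\{j,k+1,\dots,n\}\setminus\{l\}$) is independent of $i$. Substituting into the right-hand side and reorganizing produces (a) the term $\det(M_{[k],[k]}) \sum_{i=1}^{k}(-1)^{i+j}M_{ij}\det(M_{i,j})$ and (b) for each $l\in\{k+1,\dots,n\}$, a term $(-1)^{l-k}\det(E_l)\sum_{i=1}^{k}(-1)^{i+j}M_{il}\det(M_{i,j})$. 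The inner $[k]$-sum in (a) is the truncation to $i\le k$ of the Laplace expansion of $\det(M)$ along column $j$, hence equals $\det(M)$ minus its $i>k$ tail. The inner $[k]$-sum in (b), with $l\neq j$, is the corresponding truncation of the expansion along column $j$ of the matrix obtained from $M$ by replacing column $j$ with column $l$; that matrix has two equal columns and so vanishes, meaning the $[k]$-sum equals the negative of its $i>k$ tail.

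What remains---and constitutes the main technical obstacle---is to show that these leftover tail contributions cancel. Regrouping them by $i\in\{k+1,\dots,n\}$, each such $i$ contributes $-(-1)^{i+j}\det(M_{i,j})$ multiplied by $M_{ij}\det(M_{[k],[k]})+\sum_{l=k+1}^{n}(-1)^{l-k}M_{il}\det(E_l)$. The bracket is the first-row expansion of the formal $(n-k+1)\times(n-k+1)$ matrix whose row multiset is $\{i\}\cup\{k+1,\dots,n\}$ and whose column set is $\{j\}\cup\{k+1,\dots,n\}$; for $i>k$ the row $i$ appears twice, so this matrix has a repeated row and the bracket is zero. Hence only the $\det(M)\det(M_{[k],[k]})$ contribution survives, matching the left-hand side. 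All signs throughout are controlled by the same $(-1)^{i+j}$ bookkeeping embodied in Lemma~\ref{invsigns}; carefully aligning them across the three cofactor expansions is the only delicate ingredient. A more directly combinatorial alternative---presumably the one that flows into the arguments of Section~2.3---would expand both sides via Leibniz and match monomials through a sign-reversing involution on ``mismatched'' permutation pairs, with Lemma~\ref{invsigns} controlling the signs.
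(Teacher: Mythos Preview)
Your argument is correct. You expand $\det(M_{[k]\setminus\{i\},[k]\setminus\{j\}})$ along its single surviving ``top'' row $i$, use Laplace expansion of $\det(M)$ along column $j$ together with the alien-cofactor identity $\sum_{i=1}^n(-1)^{i+j}M_{il}\det(M_{i,j})=0$ for $l\neq j$ (both being instances of $M\cdot\operatorname{adj}M=\det(M)\,I$), and then observe that the leftover tail for each $i>k$ is the first-row expansion of an $(n-k+1)\times(n-k+1)$ matrix with a repeated row, hence vanishes. The sign bookkeeping checks out as you describe; one small remark is that Lemma~\ref{invsigns} is not actually needed here---your signs come directly from ordinary cofactor expansion, not from permutation-inversion counting.

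This is genuinely different from the paper's proof. The paper starts from the Dodgson--Muir identity
\[
\det(M)\det(M_{[k],[k]})^{k-1}=\sum_{\sigma\in S_k}(-1)^{\iota(\sigma)}\prod_{i=1}^k \det(M_{[k]\setminus \{i\},[k]\setminus \{\sigma(i)\}}),
\]
groups the permutations by the value $\sigma^{-1}(j)=i$, uses Lemma~\ref{invsigns} to rewrite the residual sign, recognises the inner sum over $S_{k-1}$ as Dodgson--Muir for the smaller matrix $M_{i,j}$, and then cancels $\det(M_{[k],[k]})^{k-2}$. That route has the virtue of tying the identity directly into the Dodgson--Muir framework and into the combinatorial Red Hot Potato bijection of Section~\ref{subsec combi}, which is what the paper needs downstream. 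Your route is the one the paper itself credits to Grinberg in the paragraph preceding the theorem: it is shorter, uses nothing beyond Laplace expansion and $M\cdot\operatorname{adj}M=\det(M)\,I$, and avoids invoking Dodgson--Muir at all, at the cost of not feeding directly into the edge-swapping combinatorics.
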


\begin{proof}
We will begin with an algebraic proof that derives the column expansion identities from the Dodgson-Muir identity \cite{brualdi1983}. The Dodgson-Muir identity states:

\begin{equation*}
    \det(M)\det(M_{[k],[k]})^{k-1}=\sum_{\sigma\in S_k}(-1)^{\iota(\sigma)}\prod_{i=1}^k \det(M_{[k]\setminus \{i\},[k]\setminus \{\sigma(i)\}})
\end{equation*}

Fix $j$ such that $1\leq j\leq k$. This will give us the column expansion identity expanding along column $j$. We begin by factoring $\det(M_{[k]\setminus\{i\},[k]\setminus\{j\}})$ out of the Dodgson-Muir identity for all $i$: 
\begin{align*}
\det(M)&\det(M_{[k],[k]})^{k-1}=\\&\sum_{i=1}^k (-1)^{i+j}\det(M_{[k]\setminus\{i\},[k]\setminus\{j\}})\left(\sum_{\sigma\in S_k, \sigma(i)=j}(-1)^{\iota(\sigma)-(i+j)}\prod_{l=1,l\neq i}^k\det(M_{[k]\setminus\{l\},[k]\setminus\{\sigma(l)\}})\right).
\end{align*}

We would now like to simplify what is in the parentheses. Using our definitions of $S_{k-1}$ and $\sigma'$ from above, the inside of the parentheses becomes: $$\sum_{\sigma'\in S_{k-1}}(-1)^{\iota(\sigma)-(i+j)}\prod_{l\in[k]\setminus \{i\}}\det(M_{[k]\setminus\{l\},[k]\setminus\{\sigma'(l)\}}).$$

 Let $M'=M_{i,j}$, and let $[k-1]$ be $[k]\setminus\{i\}$ or $[k]\setminus \{j\}$ depending on the context. Using this notation and the results of Lemma \ref{invsigns}, we can further simplify the inside of the parentheses to: $$\sum_{\sigma'\in S_{k-1}}(-1)^{\iota(\sigma')}\prod_{l\in[k-1]}\det(M'_{[k-1]\setminus\{l\},[k-1]\setminus\{\sigma'(l)\}}).$$
Then this is the right hand side of Dodgson-Muir, so it equals $\det(M')\det(M'_{[k-1],[k-1]})^{k-2}$. Plugging this into the parentheses and replacing $M'$ with $M_{i,j}$ gives us:
$$\det(M)\det(M_{[k],[k]})^{k-1}=\sum_{i=1}^k(-1)^{i+j}\det(M_{[k]\setminus\{i\},[k]\setminus\{j\}})(\det(M_{i,j})\det(M_{[k],[k]})^{k-2}).$$
Viewing the entries of the matrices as indeterminants and hence the determinants as polynomials in those indeterminants, we can then divide both sides by $\det(M_{[k],[k]})^{k-2}$, which gives us our result.

\end{proof}

\subsection{Combinatorial Interpretation of Column Expansion Identities}\label{sec comb int}

We will use the all minors matrix tree theorem \cite{chaiken} to derive quadratic spanning forest identities from the column expansion identity. The all minors matrix tree theorem relies on the Laplacian of a graph, defined below.

Suppose we have a directed graph with a variable or weight assigned to each directed edge.  For an undirected graph, take each edge to be a pair of directed edges, one in each direction, with the same associated weight.  Set the weight to be $0$ for non-edges.
\begin{definition}
Let $a_{ij}$ be the weight of the edge $i\to j$. Define the \textbf{Laplacian} $A$ by 
\[A_{ij}= \begin{cases} 
      -a_{ij} & i\neq j \\
      \displaystyle\sum_{m\neq i}a_{im} & i=j
   \end{cases}
\]
\end{definition}

\begin{remark}
The explicit statement of the all minors matrix tree theorem involves several sign-based functions that become irrelevant in our particular context, so we will forgo stating it here. For an explicit statement of the all minors matrix tree theorem, see \cite{chaiken}. In the context of this paper, we focus on minors of the Laplacian, which represent signed forests. In each tree of the forest, there is exactly one vertex from the set of removed rows and exactly one vertex from the set of removed columns. Therefore the size of the set used to make the Laplacian minor corresponds to the number of trees in the forest. For a particular Laplacian minor, we will get a sum of signed forests that all satisfy the vertex condition above. The sign of the forest corresponds to the sign it contributes to the determinant of the Laplacian. To find the sign of a forest, we can think of it in terms of a permutation array, where each entry in the permutation array corresponds to an edge in the forest. More details on the sign are discussed below.
\end{remark}

In order to obtain our spanning forest identities, we will replace $M$ in the column expansion identities with the Laplacian of a complete graph ($L$) with a row and column already removed. Because we need to remove a row and column for the matrix tree theorem to work, when we are looking for quadratic spanning forest identities on $m$ special vertices, we can imagine that $M$ in the column expansion identities is a matrix with a row and column already removed from the Laplacian. Thus, we will look at the column expansion identities associated with $k=m-1$.

It suffices to consider complete graphs because we can obtain any subgraph of a complete graph by setting some of the edge weights to $0$.  Let $n\geq m$ be the number of vertices in the complete graph.

Let us consider the determinant of a minor of the Laplacian.  Expanding the determinant by permutations, we can think of each term as a permutation array which acts as a mask revealing only certain entries of the Laplacian.  Consider how off-diagonal entries from the Laplacian can appear in a permutation array.  Either these entries must form a cycle, which cannot happen for a forest (in fact such terms will cancel since re-orienting the cycle gives a sign reversing involution, explaining why only forests appear), or off-diagonal entries correspond to cases where the row and column removed do not match.
The sign that a forest contributes to the determinant includes the sign of the permutation associated to its permutation array and the signs of the entries in the permutation array. The on-diagonal entries of the Laplacian are by definition positive, so negative signs from entries of the array can only be introduced where row and column indices do not match.

\begin{example}
Consider $\det(L_{13,12})$. This gives us forests with two trees, one containing the vertex $1$ and one containing the vertices $2$ and $3$. The tree containing the vertex $1$ has exactly one vertex from the rows removed ($1$) and one vertex from the columns removed (in this case, also $1$). The other tree also contains exactly one vertex from the rows removed ($3$) and one vertex from the columns removed ($2$). 

The sign of each such forest is negative. The forest coming out of the Laplacian uses only entries on the diagonal (again, off-diagonal entries will result in cycles). When removing rows $1$ and $3$ and columns $1$ and $2$, the resulting matrix has the index of every row and column matching except for row $2$ and column $3$, which are now the first row and column in the new matrix. This means that the entry on the diagonal for row $2$ and column $3$ is $-a_{23}$ instead of the positive sum on the diagonal of matching indices. Because the entry is negative and it is the only negative entry at play in such a forest, the sign of each forest of this kind is negative.
\end{example}

Now that we have discussed the signs of forests resulting from the matrix tree theorem, let us look at the signs arising from the determinants in the column expansion identities. Again, we begin by replacing the $M$ in the column expansion identities with a Laplacian with one row and one column already removed.

\begin{definition}
The identity obtained by replacing $M$ in the column expansion identity $j$ with $L_{r,c}$ is the spanning forest identity $\mathcal{L}_{r,c}(j)$.  The right hand side of this identity will be notated $L_{r,c}(j)$.
\end{definition}

\begin{lemma}\label{lem form}
  $\mathcal{L}_{r,c}(j)$ is $(1,1,\ldots, 1)(1,2,\ldots, m) = L_{r,c}(j)$ and $L_{r,c}(j)$ is a sum of $AB$ pairs.
\end{lemma}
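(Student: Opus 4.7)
The plan is to substitute $M = L_{r,c}$ into Theorem \ref{columnexpansion} and interpret each of the four types of determinants arising via the all-minors matrix tree theorem \cite{chaiken}. I will designate $m$ vertices of $G$ as marked so that each of the $m$ rows and $m$ columns removed in $\det(M_{[k],[k]}) = \det(L_{r[k], c[k]})$ corresponds to a distinct marked vertex; this fixes the bijection between the row/column indices appearing in the column expansion identity and the coordinates of the partition notation of Section \ref{subsec set up}.

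For the LHS, I interpret the two factors separately. The determinant $\det(L_{r,c})$ is (up to sign) the spanning tree polynomial of $G$ by the standard matrix tree theorem; since every spanning tree is connected, all $m$ marked vertices share one component, so this factor equals the partition polynomial $(1, 1, \ldots, 1)$. The determinant $\det(L_{r[k], c[k]})$ is an $m$-minor, and the all-minors matrix tree theorem identifies it with a signed sum over spanning $m$-forests in which each tree contains one removed-row vertex and one removed-column vertex. With the marked set chosen as above, each tree contains exactly one marked vertex, so this factor equals $(1, 2, \ldots, m)$. Combining, the LHS is $(1,1,\ldots,1)(1,2,\ldots,m)$.

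For the RHS $L_{r,c}(j) = \sum_{i=1}^k (-1)^{i+j}\det(L_{ri, cj})\det(L_{r[k]\setminus\{i\}, c[k]\setminus\{j\}})$, I examine each summand. The factor $\det(L_{ri, cj})$ is a 2-minor and hence, by the matrix tree theorem, a signed sum of spanning 2-forests; the induced partition on the marked vertices therefore has exactly 2 parts, making it a $B$-partition polynomial — or more generally a sum of $B$-partition polynomials, since marked vertices left unpinned by the removals contribute "$-$" entries that expand via the identity $(p_1, \ldots, -, \ldots, p_m) = \sum_{s} (p_1, \ldots, s, \ldots, p_m)$ from Section \ref{subsec set up}. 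The factor $\det(L_{r[k]\setminus\{i\}, c[k]\setminus\{j\}})$ is an $(m-1)$-minor, giving spanning $(m-1)$-forests whose induced partition on the $m$ marked vertices has $m-1$ parts, forcing exactly one part to have size 2 and the others size 1 — the definition of an $A$-partition. Thus each summand is a signed $AB$ pair.

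The principal obstacle is tracking the signs produced by the matrix tree theorem when the removed row and column sets differ, and verifying that they combine coherently with the $(-1)^{i+j}$ factor from the column expansion identity. The sign analysis immediately preceding the lemma (the discussion of $\det(L_{13,12})$ and the role of off-diagonal Laplacian entries) shows how these signs are computed; the remaining task is to check case-by-case that the signs assemble into the claimed signed sum of $AB$ pairs, which then identifies $L_{r,c}(j)$ with the RHS of the desired quadratic spanning forest identity.
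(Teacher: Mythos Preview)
Your approach is essentially the paper's: substitute $M=L_{r,c}$ into the column expansion identity and apply the all-minors matrix tree theorem to each of the four determinant types, identifying the LHS factors as $(1,1,\ldots,1)$ and $(1,2,\ldots,m)$ and each RHS summand as a product of an $(m-1)$-part partition ($A$) and a $2$-part partition ($B$).

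Two small remarks. First, the paper is more explicit than you are about the index shift: when row $r$ and column $c$ are already deleted from $L$, the indices $i,j$ of $M$ do not coincide with indices of $L$, so the paper splits into cases $j<c$ versus $c\le j$ (and $i<r$ versus $i\ge r$) to write things like $\det(L_{[k+1]\setminus\{i\},[k+1]\setminus\{j\}})$ correctly; your notation $L_{r[k]\setminus\{i\},\,c[k]\setminus\{j\}}$ glosses over this. Second, your ``principal obstacle'' of reconciling the $(-1)^{i+j}$ with the matrix-tree signs is not actually needed here: the lemma only asserts that $L_{r,c}(j)$ is \emph{some} sum of $AB$ pairs, and the paper's proof likewise stops at that structural observation. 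The careful sign bookkeeping you anticipate is carried out in the next lemma (Lemma~\ref{row}) and in Lemma~\ref{exactmonomial}, where it is shown that after cancellation each permissible $AB$ monomial appears with coefficient exactly one.
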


\begin{proof}
  Plugging in $L_{r,c}$ for $M$ in the column expansion identity we get
  \begin{align*}
    \det(L_{r,c})\det(L_{[k+1],[k+1]})=&\sum_{i=1}^{r-1}(-1)^{i+j}\det(L_{[k+1]\setminus\{i\},[k+1]\setminus\{j\}})\det(L_{\{i,r\},\{j,c\}})\\&+\sum_{i=r}^{k}(-1)^{i+j}\det(L_{[k+1]\setminus\{i+1\},[k+1]\setminus\{j\}})\det(L_{\{i+1,r\},\{j,c\}})
  \end{align*}
  if $j<c$ and
  \begin{align*}
    \det(L_{r,c})\det(L_{[k+1],[k+1]})=&\sum_{i=1}^{r-1}(-1)^{i+j}\det(L_{[k+1]\setminus\{i\},[k+1]\setminus\{j+1\}})\det(L_{\{i,r\},\{j+1,c\}})\\&+\sum_{i=r}^{k}(-1)^{i+j}\det(L_{[k+1]\setminus\{i+1\},[k+1]\setminus\{j+1\}})\det(L_{\{i+1,r\},\{j+1,c\}}).
  \end{align*}
  if $c\leq j$.
  
  By the matrix tree theorem, $\det(L_{r,c})$ gives the polynomial of all spanning trees of the graph. Writing this in our partition notation $\det(L_{r,c}) = (1,1,\ldots, 1)$.  By the all minors matrix tree theorem, $\det(L_{[k+1],[k+1]})$ gives the spanning forest polynomial where each marked vertex is in a different tree, that is $(1,2,\ldots, m)$.

  Now consider the right hand side.  Terms of the form $\det(L_{[k+1]\setminus\{i\},[k+1]\setminus\{j\}})$ have every column except for column $j$ removed from the Laplacian. Applying the all minors matrix tree theorem, every marked vertex except for vertex $j$ must be in a separate forest, and vertex $j$ must be in the same forest as vertex $i$, hence this is an $A$ partition.  The $B$ partition likewise comes from the terms of the form $\det(L_{\{i,r\},\{j,c\}})$ which by the all minors matrix-tree theorem give spanning forests with two trees.
\end{proof}

\begin{lemma}\label{row}
The row already removed in the Laplacian does not impact the resulting forest identity. That is to say, if we fix a column identity $j$, then replacing $M$ in that column identity with $L_{r,c}$ will give us the same forest identity as replacing $M$ with $L_{r',c}$ for a fixed $c$ and any $r,r'\in [k+1]$.
\end{lemma}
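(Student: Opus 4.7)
The plan is to use the classical fact that for the Laplacian $L$ of a graph, all signed cofactors are equal: $\det(L_{r,c}) = (-1)^{r+c}\alpha$, where $\alpha := \det(L_{1,1})$ is the spanning tree polynomial. This follows from $L\mathbf{1}=0$, which forces $\mathrm{adj}(L)=\alpha\mathbf{1}\mathbf{1}^{T}$. The strategy is to show that this is the \emph{only} $r$-dependence in $\mathcal{L}_{r,c}(j)$, and that it cancels globally between the two sides, leaving an $r$-independent identity among spanning forest polynomials.

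First I would rewrite the LHS of $\mathcal{L}_{r,c}(j)$ as $(-1)^{r+c}\alpha\det(L_{[k+1],[k+1]})$. The second factor does not involve $r$ at all, so the LHS depends on $r$ only through the global sign $(-1)^{r+c}$. By the column expansion identity (Theorem~\ref{columnexpansion}), the RHS $L_{r,c}(j)$ equals the LHS as a polynomial in the edge weights, so $(-1)^{r+c}L_{r,c}(j)$ is likewise $r$-independent.

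Next I would translate this polynomial identity into the spanning forest setting via the all-minors matrix tree theorem. By Lemma~\ref{lem form}, the LHS of the forest identity is $(1,1,\ldots,1)(1,2,\ldots,m)$, which is manifestly independent of $r$; the sign $(-1)^{r+c}$ appearing in the algebraic identity is absorbed by the convention matching $\det(L_{r,c})$ with the spanning tree polynomial, and a matching sign arises term-by-term on the RHS through the column expansion identity. Even though the formal expressions for $L_{r,c}(j)$ as sums of spanning forest polynomial products (possibly with $-$ wildcards coming from minors $\det(L_{R,C})$ with $R=C$) may look literally different for different $r$, once every $-$ wildcard is expanded into a sum of fully-determined partitions via the rule of Definition~\ref{def partition}, one obtains a canonical form. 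Since fully-determined spanning forest polynomials for the complete graph on $n\geq m$ vertices are linearly independent, this canonical form is uniquely determined by the underlying polynomial and hence is the same for every $r\in[k+1]$.

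The main obstacle I anticipate is the sign bookkeeping: one must verify that $(-1)^{r+c}$ is indeed the only $r$-dependent global sign after applying the all-minors matrix tree theorem to each minor in the identity. The position shift $i\mapsto i$ (for $i<r$) versus $i\mapsto i+1$ (for $i\geq r$) appearing in the proof of Lemma~\ref{lem form} interacts with the MTT sign conventions for the minors $\det(L^{rc}_{i,j})$ and $\det(L^{rc}_{[k]\setminus\{i\},[k]\setminus\{j\}})$, and one must check that these shifted signs conspire to produce exactly the compensating global factor so that after cancellation the forest identity is genuinely $r$-independent. Because this sign factors out at the level of the column expansion identity itself, the verification is routine but requires care.
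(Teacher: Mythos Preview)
Your argument establishes that $L_{r,c}(j)$ and $L_{r',c}(j)$ are equal \emph{as polynomials in the edge variables} (once the global sign $(-1)^{r+c}$ has been accounted for). But the lemma, as the paper uses it, requires equality \emph{as formal linear combinations of $AB$ monomials}, i.e.\ as elements of $V_m$. See Lemma~\ref{exactmonomial}, which speaks of ``exactly one copy of every permissible monomial'' in $L_{c,c}(j)$, and Proposition~\ref{prop lin ind}, which manipulates the various $L_{m,c}(j)$ as vectors in $V_m$ and tests their linear independence there. The distinction is exactly the one the paper draws in the $m=3$ discussion: the six $\mathcal{L}$ identities coincide as polynomial identities but are genuinely different in $V_3$.

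Your bridge from ``equal as polynomials'' to ``equal in $V_m$'' is the sentence ``since fully-determined spanning forest polynomials for the complete graph on $n\geq m$ vertices are linearly independent, this canonical form is uniquely determined by the underlying polynomial.'' Even granting linear independence of the individual spanning forest polynomials, what you need is linear independence of the \emph{products} $A_iB_j$, and this fails: the entire content of the paper is that $X_m$---the space of $AB$ combinations vanishing on every graph---has dimension $m(m-2)>0$. Any nonzero element of $X_m$ is a nontrivial linear relation among the $A_iB_j$, valid in particular on $K_n$ for all $n$. So from your argument you can only conclude $L_{r,c}(j)-L_{r',c}(j)\in X_m$, not that this difference vanishes in $V_m$; and appealing to the structure of $X_m$ here would be circular, since Lemma~\ref{row} is one of the inputs to that computation.

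The paper's proof avoids this by tracking monomials directly: it shows that for fixed $c,j$, the only $AB$ pairs that survive in $L_{r,c}(j)$ are those with $j$ paired to some $\ell$ in $A$ and with $c,\ell$ opposite $j$ in $B$, and it verifies by an explicit sign computation that the one apparently $r$-dependent family occurs twice with opposite signs and cancels. Beyond bypassing any linear-independence hypothesis, this argument produces the description of ``permissible monomials'' on which Lemma~\ref{exactmonomial} and the later dimension count depend.
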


\begin{proof}
  Begin with a fixed $j$ and $c$. We will assume that $j<c$ for this proof, and end with the adjustment to be made if $c\leq j$. Let $r$ be an arbitrary value between $1$ and $k+1$. We will begin by determining the $AB$ pairs possible in $L_{r,c}(j)$.  As in the previous proof we have that $\mathcal{L}_{r,c}(j)$ is
\begin{align*}
    \det(L_{r,c})\det(L_{[k+1],[k+1]})=&\sum_{i=1}^{r-1}(-1)^{i+j}\det(L_{[k+1]\setminus\{i\},[k+1]\setminus\{j\}})\det(L_{\{i,r\},\{j,c\}})\\&+\sum_{i=r}^{k}(-1)^{i+j}\det(L_{[k+1]\setminus\{i+1\},[k+1]\setminus\{j\}})\det(L_{\{i+1,r\},\{j,c\}}).
\end{align*}

The $A$ partition in an $AB$ pair in the forest identity comes from $\det(L_{[k+1]\setminus\{i\},[k+1]\setminus\{j\}})$. Applying the all minors matrix tree theorem, every marked vertex except for vertex $j$ must be in a separate forest, and vertex $j$ must be in the same forest as vertex $i$ (or as $i+1$ if we are in the second sum). Viewing this as an $A$ partition, this means every vertex is in a separate part except for vertex $j$, which must be in a pair with one other vertex.
Let $p_v^A$ be the part that vertex $v$ is in for the $A$ partition and similarly for $p_v^B$.  
The $A$ partition is completely defined by $p_j^A$, since all other vertices must be in a part by themselves, and so we will also write $p^A_j$ for the other vertex in this part.  
The $B$ partition, on the other hand, has only two parts. Without loss of generality, let us call the part that vertex $j$ is in $1$ in our partition notation.  We will write $p_j^B=1$ to indicate this.

To determine what $AB$ pairs are possible within $L_{r,c}(j)$, let us specifically look at the $AB$ pairs where the partition $A$ is defined by $p^A_j=\ell$. There are two options for how this $A$ partition appears from $\det(L_{[k+1]\setminus\{i\},[k+1]\setminus\{j\}})$: one is that $i=j$ (or $i+1=j$ if we are in the second sum), in which case $j$ can pair with any vertex including $\ell$. The second is where $i=\ell$ (or $i+1=\ell$ if we are in the second sum), in which case $j$ must pair with $\ell$. If $i=j$ (or $i+1=j$), then the determinant that gives the $B$ partition is $\det(L_{\{j,r\},\{j,c\}})$. Using the matrix tree theorem, we see that $j$ and $r$ must be in separate partitions. Since we have called $p_j^B=1$, then we must have $p_r^B=2$. Similarly, $p_c^B=2$. These are our only restrictions, so any other vertex (including $\ell$) can be in either part.

If instead we have that $i=\ell$ (or $i+1=\ell$ if we are in the second sum), then the determinant that gives the $B$ partition is $\det(L_{\{\ell,r\},\{j,c\}})$. Again, $j$ and $c$ must be in different parts, so again, $p_c^B=2$. Then we have two possibilities for the rows: either $p_\ell^B=1$ and $p_r^B=2$, or $p_\ell^B=2$ and $p_r^B=1$. These are our only restrictions, so any other vertex can be in either part.

In summary, starting with the assumption that $p^A_j=\ell$ and $p_j^B=1$, then $p_c^B$ must always equal $2$, and all other vertices (aside from $\ell$ and $r$) can always be in either part in $B$. So our possibilities are that $p_r^B=2$ and $p_\ell^B=1$; $p_r^B=2$ and $p_\ell^B=2$; and that $p_r^B=1$ and $p_\ell^B=2$. Interestingly, the case in which $p_r^B=2$ and $p_\ell^B=1$ shows up twice: once when $i=j$ and once when $i=\ell$.

Let us look at this case more closely. We claim that the signs in the two cases when $p_r^B=2$ and $p_\ell^B=1$ are opposite. Then these instances would cancel out, and this partition would actually not appear in the final forest identity. 

Let us start in the first sum, that is assuming that $j,\ell<r$. Then when $i=j$, we have signs coming from three places: $(-1)^{i+j}$, and each of the two determinants. Because $i=j$, the $(-1)^{i+j}$ will just contribute a positive sign. In the first determinant, $\det(L_{[k+1]\setminus\{j\},[k+1]\setminus\{j\}})$, the indexing on both the rows and columns match, so the sign is positive. In the second determinant, $\det(L_{\{j,r\},\{j,c\}})$, we do not necessarily have that $r$ and $c$ match. However, we can switch rows until the existing row $c$ is in the same place as the existing column $r$. Then the base determinant would be positive, and the sign would be introduced by the number of times we swap rows to line up the row $c$ with the column $r$, and by the signs of the entries on the diagonal. Since in our assumptions, $i=j$ are both less than $r$ and $c$, all rows in between $r$ and $c$ are still in the matrix, so we need to switch $|c-r-1|$ times to get row $c$ in the same position as column $r$. This means our sign from switching rows is $(-1)^{c-r-1}$. Once we have done the swaps, every entry is on the diagonal, but the entry in row $c$, column $r$ is negative since it did not originally come from the diagonal. Thus our sign for the second determinant is $(-1)^{c-r}$, so our overall sign for the partition when $i=j$ is $(-1)^{i+j+c-r}=(-1)^{c-r}$.

In contrast, when $i=\ell$, we still have $(-1)^{i+j}$, but our first determinant no longer has matching indices of removed rows and columns. That is to say, although row $i$ and column $j$ do line up (since all other rows and columns before $k+2$ have been removed), that entry did not originally come from a diagonal, so the first determinant contributes a negative sign. In the second determinant, since we are specifically looking at the case where $p_r^B=2$ and $p_\ell^B=1$, we want to pair the existing column $r$ with row $c$, and the existing column $i=\ell$ with row $j$. As discussed in the previous paragraph, swapping rows so the $r$ and $c$ match up and taking into account the negative entry gives a sign of $(-1)^{c-r}$. Here we must also swap row $j$ so that it lines up with column $i$. Since these rows do not interact with $r$ or $c$, we will similarly get a sign of $(-1)^{i-j}$. Taken all together, this forest will have a sign of $(-1)^{i+j+1+c-r+i-j}=(-1)^{c-r+1}$. Notice that this is the opposite of the sign when the forest comes from $i=j$, so these two forests cancel out.

We did this assuming that $\ell, j<r$. We could also have that $\ell<r<j$. In this case, we are dealing with the second sum when $i+1=j$. The effect on the sign is that we still have $(-1)^{i+j}$, but now $i$ and $j$ are opposite parity instead of the same parity. This will contribute a minus sign. In the first determinant, the indexing still matches, so we still get a positive sign. In the second determinant, since row $i+1$ is removed and is in between $r$ and $c$, then row $c$ has to switch with one fewer row to get to position $r$. This means that instead of the overall sign of the second determinant being $(-1)^{c-r}$, it will be $(-1)^{c-r-1}$. However, taken with the negative sign contributed by $(-1)^{i+j}$, we still have a sign of $(-1)^{c-r}$ overall for the forest pair. In the $i=\ell$ case, we are still in the first sum since $\ell<r$, and so the signs work out the same as the first time we did it, giving a sign of $(-1)^{c-r+1}$. Again, the signs are opposite, and the forest pairs cancel out.

If instead we have $j<r<\ell$, then when $i=j$, we are in the first sum and the signs work out the same as the first time we did it, so the sign on that forest pair is $(-1)^{c-r}$. When $i+1=\ell$ we are in the second sum. Then we still have $(-1)^{i+j}$ contributed by the beginning and a minus sign contributed by the first determinant. In the second determinant, there are two changes: first, switching row $j$ to the $\ell$ position will require one less swap since it must pass by the empty $r$ row. However, second, it will require one extra swap since it is trying to get to position $i+1$ instead of position $i$. Taken together, this gives us the same number of swaps, and therefore the same sign as before, namely $(-1)^{c-r+1}$. Again, our signs are opposite and the two cancel out.

Finally, we could have $r<j,\ell$. When $i+1=j$, we are in the second sum, which we have already shown to give a sign of $(-1)^{c-r}$. When $i+1=\ell$, we are also in the second sum. We do still need an extra swap to get to position $i+1$ instead of position $i$. We also do still need one less swap. This time it is not for $j$ to pass by the empty $r$ row, since $j$ is bigger than $r$, but rather for $c$ to pass by the empty $j$ row since $j$ is smaller than $c$. Regardless, the sign still comes out to $(-1)^{c-r+1}$, and again, the forest pairs are of opposite signs and cancel.

We have now proven our claim that when $p_r^B=2$ and $p_\ell^B=1$, these partitions end up showing up twice, each of opposite sign, and cancelling each other out. That means that in $L_{r,c}(j)$, when $p^A_j=\ell$ and $p_j^B=1$, we only have two possibilities: $p_r^B=1$, $p_c^B,p_\ell^B=2$, and everything else could be either; or $p_r^B=2$, $p_c^B,p_\ell^B=2$, and everything else could be either. Because $p_r^B$ can either be $1$ or $2$, we actually only have one scenario: If we assume that $p_j^A=\ell$ and $p_j^B=1$, then we must have that $p_c^B,p_\ell^B=2$ and everything else could be either. Notice then, that the options available have nothing to do with the row selected, they are only dictated by the column $c$ originally removed and the column identity $j$ that is used. As a result, assuming that $j<c$, we have shown that the row removed does not impact which monomials appear in the identity.

If instead we have that $c\leq j$, the new identity becomes 
\begin{align*}
    \det(L_{r,c})\det(L_{[k+1],[k+1]})=&\sum_{i=1}^{r-1}(-1)^{i+j}\det(L_{[k+1]\setminus\{i\},[k+1]\setminus\{j+1\}})\det(L_{\{i,r\},\{j+1,c\}})\\&+\sum_{i=r}^{k}(-1)^{i+j}\det(L_{[k+1]\setminus\{i+1\},[k+1]\setminus\{j+1\}})\det(L_{\{i+1,r\},\{j+1,c\}}).
\end{align*}

The only thing changed here is that we are largely just indexing by $j+1$ while the sign $(-1)^{i+j}$ at the beginning of each sum does not change to $j+1$. This merely reverses our signs in our argument showing that the two instances of $p_r^B=2$ and $p_\ell^B=1$ cancel each other out. Since the specific sign there did not matter, just that the signs were opposite, this unilateral sign change does not impact the result.
\end{proof}

Because we have just shown that the row originally removed from the Laplacian does not matter, for simplicity of indexing we will usually either match the index of the row removed with the index of the column removed, that is, that $r=c$, or take $r=m$.

\begin{definition}
Fix $c,j\in [k+1]$. We will call an $AB$ pair a \textbf{permissible monomial} if:

\begin{itemize}
    \item In the partition $A$, $j$ is in a part with another element, let us call it $\ell$. All other elements aside from $j$ and $\ell$ are in parts by themselves.
    \item In the partition $B$, $j$ is in one part, and $c$ and $\ell$ are in the other part. All other elements may be in either part.
\end{itemize}
\end{definition}

\begin{lemma}\label{exactmonomial}
Fix $c,j\in [k+1]$. Then the right hand side of $L_{c,c}(j)$ consists of exactly one copy of every permissible monomial.
\end{lemma}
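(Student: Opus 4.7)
My plan is to apply the case analysis in the proof of Lemma~\ref{row} with the specialization $r=c$ and carefully track the resulting signs. By Lemma~\ref{row}, the collection of monomials appearing in $L_{r,c}(j)$ does not depend on $r$, so computing $L_{c,c}(j)$ is equivalent. Fix $\ell \in [m]\setminus\{j\}$, which will parametrize the $A$-partition via $p_j^A = \ell$. Following Lemma~\ref{row}, the summands contributing such an $A$-partition are $i=j$ (whose first determinant $\det(L_{[m]\setminus\{j\},[m]\setminus\{j\}})$ expands over all choices of $\ell'$ and thus picks up $\ell'=\ell$ among others) and, when $\ell \neq c$, $i=\ell$ (whose first determinant picks out this specific $\ell$ alone).

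Next, I would specialize the trichotomy for $(p_r^B,p_\ell^B)$ from Lemma~\ref{row} to $r=c$. The constraint $p_c^B=2$, coming from $j$ and $c$ being forced into different $B$-parts by the second determinant, now coincides with $p_r^B=2$, which rules out the case $(p_r^B,p_\ell^B)=(1,2)$ entirely. The case $(2,1)$ arises once from $i=j$ and once from $i=\ell$ with opposite signs; this is precisely the cancellation shown in Lemma~\ref{row}, and when $r=c$ the explicit signs there reduce to $\pm 1$. The case $(2,2)$ arises solely from the $i=j$ summand, because in the $i=\ell$ branch the forced $p_r^B=2$ would require $p_\ell^B=1$. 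If $\ell=c$ then only $i=j$ contributes at all, and the permissible monomial appears directly with no cancellation needed.

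Finally, I would verify that each surviving $AB$ pair carries coefficient $+1$. In the $i=j$ summand the explicit factor $(-1)^{i+j}$ equals $+1$; the first determinant $\det(L_{[m]\setminus\{j\},[m]\setminus\{j\}})$ has matching removed rows and columns and so contributes $+1$ per forest; and the second determinant $\det(L_{\{j,c\},\{j,c\}})$ has matching removed indices as well, contributing $+1$ per forest. Collecting the surviving $AB$ pairs over $\ell \in [m] \setminus \{j\}$ and over the $2^{m-3}$ free placements of the other marked vertices in $B$ produces exactly one copy of every permissible monomial. The main obstacle is the sign bookkeeping, but Lemma~\ref{row} has already handled the only nontrivial cancellation, so the remaining task is just to confirm that the $(2,2)$ survivors arrive with the correct positive coefficient, which follows from the uniform matching of removed row and column indices in both determinants when $r=c$ and $i=j$.
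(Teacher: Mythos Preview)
Your proposal is correct and takes essentially the same approach as the paper: both specialize the case analysis of Lemma~\ref{row} to $r=c$, observe that the $i=j$ summand produces each permissible monomial once with sign $+1$ (since all removed row and column indices match), and argue that the $i=\ell$ summand contributes nothing further. The only cosmetic difference is that you track the $(p_r^B,p_\ell^B)$ trichotomy and invoke the cancellation of the $(2,1)$ case explicitly, whereas the paper phrases the same fact as ``when $r=c$ the determinant $\det(L_{\{\ell,c\},\{j,c\}})$ forces $\ell$ and $j$ into the same $B$-part, yielding only non-permissible monomials,'' and then defers to Lemma~\ref{row} for the statement that non-permissible monomials do not survive.
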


\begin{proof}
We showed in the proof of Lemma \ref{row} that only permissible monomials appear in $L_{c,c}(j)$. Then we need to show that each permissible monomial must appear at least once, and does not appear more than once.

Suppose that we have a permissible monomial $AB$ such that $j$ is paired with $\ell$ in $A$. We showed in the proof of Lemma \ref{row} that this monomial would appear once when $i=j$: because our row and column removed from the original Laplacian are both $c$ and $i=j$, the indices of all removed rows and columns match. This means that every monomial will appear one time when $i=j$, and will have a positive sign. Then we just need to show that the permissible monomial does not appear more than once.

As shown in the proof of Lemma \ref{row}, the only other way that $j$ can be paired with $\ell$ in $A$ is if $i=\ell$ (or $i+1=\ell$ if $\ell>c$). However, when this occurs, the determinant that yields the $B$ partition is $\det(L_{\{\ell,c\},\{j,c\}})$. Because $c$ is the index of both the row and column originally removed, this would require $\ell$ and $j$ to be in the same partition, resulting in a monomial that is not permissible. Thus we have shown that the only way to get a permissible monomial is when $i=j$, so permissible monomials cannot appear more than once.
\end{proof}

Now that we have a good understanding of the monomials associated with the column expansion identities, we can look at the overall interpretation of the column expansion identities. We will give two definitions to make our interpretation easier to verbalize.

\begin{definition}
For a fixed integer $k$, a \textbf{$k$-forest} is a forest with $k$ trees.
\end{definition}

\begin{definition}
A pair of forests is called \textbf{forbidden} if two or more marked vertices are in the same tree in both pairs. In the language of partitions, a pair of partitions is \textbf{forbidden} if two or more marked vertices are in the same part in both partitions.
\end{definition}

In the case of $AB$ pairs, note that permissible pairs are all non-forbidden, but that being permissible is stronger than being non-forbidden due to the additional constraint on the location of $c$ among the $B$ partition.

\begin{example}
The partition pair $A_5B_2$ is forbidden because $A_5=(1,2,3,2)$ has vertices $2$ and $4$ in the same part, and $B_2=(1,1,2,1)$ also has vertices $2$ and $4$ in the same part.
\end{example}

\begin{proposition}
Fix $c$ and $j$ such that $c,j\in [k+1]$. The left hand side of the column expansion identities corresponds to graph pairs, one of which is a tree and one of which is a $(k+1)$-forest. The right hand side of the column expansion identities corresponds to pairs of non-forbidden forests, one of which is a $2$-forest and one of which is a $k$-forest, such that $c$ and $j$ are in different trees in the $2$-forest.
\end{proposition}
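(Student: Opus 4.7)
The plan is to derive both halves of the proposition directly from Lemmas~\ref{lem form} and~\ref{exactmonomial}, which have already carried out the bulk of the combinatorial work of identifying the $AB$ structure of each side; the proposition is essentially a repackaging of those two lemmas in the language of $k$-forests and $2$-forests.

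For the left-hand side, I would simply cite Lemma~\ref{lem form}, which identifies $\det(L_{r,c})\det(L_{[k+1],[k+1]})$ with the product $(1,1,\ldots,1)(1,2,\ldots,m)$. The first factor is the spanning tree polynomial (all marked vertices grouped in a single tree), and the second, having $m = k+1$ parts, is the $(k+1)$-forest polynomial in which every marked vertex sits in its own tree. Together they represent precisely the pairs consisting of one spanning tree and one $(k+1)$-forest of the stated type.

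For the right-hand side, I would invoke Lemma~\ref{exactmonomial} to express the RHS as one copy of each permissible monomial, and then verify the two properties required by the proposition for an arbitrary permissible monomial. By the definition of permissibility, the $A$-partition has a unique doubleton $\{j,\ell\}$ with every other marked vertex in a singleton, while the $B$-partition has $j$ in one part and $c,\ell$ in the other. The first observation gives that $A$ is a $k$-forest and $B$ is a $2$-forest, and the placement in $B$ gives immediately that $c$ and $j$ lie in different trees of the $2$-forest. For the non-forbidden condition, the only pair of marked vertices sharing a tree in $A$ is $\{j,\ell\}$, but these two vertices are also in different parts of $B$ (with $j$ on one side and $\ell$ on the other alongside $c$), so no pair of marked vertices is grouped together in both forests simultaneously.

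There is essentially no obstacle in this argument, since both halves reduce immediately to unpacking the definitions after Lemmas~\ref{lem form} and~\ref{exactmonomial}; the only point that deserves a line of care in the write-up is tracking that the unique doubleton $\{j,\ell\}$ of the $A$-partition is precisely the pair whose automatic splitting in $B$ guarantees non-forbiddenness. I would also flag explicitly that the proposition only asserts that every term appearing on the RHS is of this form, not that every non-forbidden $(2\text{-forest},k\text{-forest})$ pair with $c,j$ separated actually appears; indeed many such pairs (those whose $A$-doubleton avoids $j$) do not correspond to permissible monomials and therefore do not occur, and the plan does not need to address them.
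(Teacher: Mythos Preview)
Your proposal is correct and follows essentially the same route as the paper: both arguments read off the LHS from the matrix-tree interpretation (the paper does this directly, you via Lemma~\ref{lem form}, which is the same content), and both obtain the RHS by invoking Lemma~\ref{exactmonomial} and then checking that each permissible monomial is non-forbidden with $c$ and $j$ separated in $B$. Your explicit observation that the proposition asserts only one direction (every RHS term has the stated form, not conversely) is a useful clarification that the paper's proof also tacitly relies on.
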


\begin{proof}
Again, let us look at the column expansion identities when we replace $M$ by $L_{c,c}$. By a direct application of the matrix tree theorem, the left hand side of the column expansion identities yields graph pairs, one of which is a tree and one of which is a $(k+1)$-forest. The right hand side yields graph pairs, one of which is a 2-forest and one of which is a $k$-forest. 

Because the left hand side does not involve any minus signs, the interpretation of a tree and a $(k+1)$-forest suffices. On the right hand side, some of the pairs of forests are subtracted off. Lemma \ref{exactmonomial} tells us that exactly one copy of each permissible monomial appears in the right hand side. The $A$ partition corresponds to the $k$-forest, and the $B$ partition corresponds to the $2$-forest. By definition of a permissible monomial, if $j$ and $\ell$ are in the same part in $A$, they must be in separate parts in $B$, so all corresponding forests are non-forbidden. Additionally, since $j$ and $c$ are always in different parts in the $B$ partition in permissible monomials, they must be in different trees in the $2$-forest. Thus a permissible monomial corresponds to non-forbidden forests in which $c$ and $j$ are in different trees in the $2$-forest, and so Lemma \ref{exactmonomial} proves the right hand side of our result.

\end{proof}

\subsection{Combinatorial Proof of Column Expansion Identities}\label{subsec combi}

The proof given in section 2.2 of the column expansion identities is an algebraic proof based on the Dodgson/Muir identity. However, we can view this as a combinatorial proof as well by using the combinatorial proof of the Dodgson/Muir identity given by the first author \cite{dennis}. This proof consists of an algorithm called the generalized Red Hot Potato algorithm that matches a set of $k$ ordered forests (one tree rooted at zero and $k-1$ forests each rooted at vertices $0$ through $k$) to a set of $k$ ordered forests, each a $k$-forest rooted at $0,1,\dots,i-1,i+1,\dots, k$. Figure \ref{genrhpfig} gives a schematic. The Red Hot Potato algorithm accomplishes this by swapping edges back and forth amongst the set of $k$ graphs. 

\begin{figure}
    \centering
    \includegraphics[scale=.4]{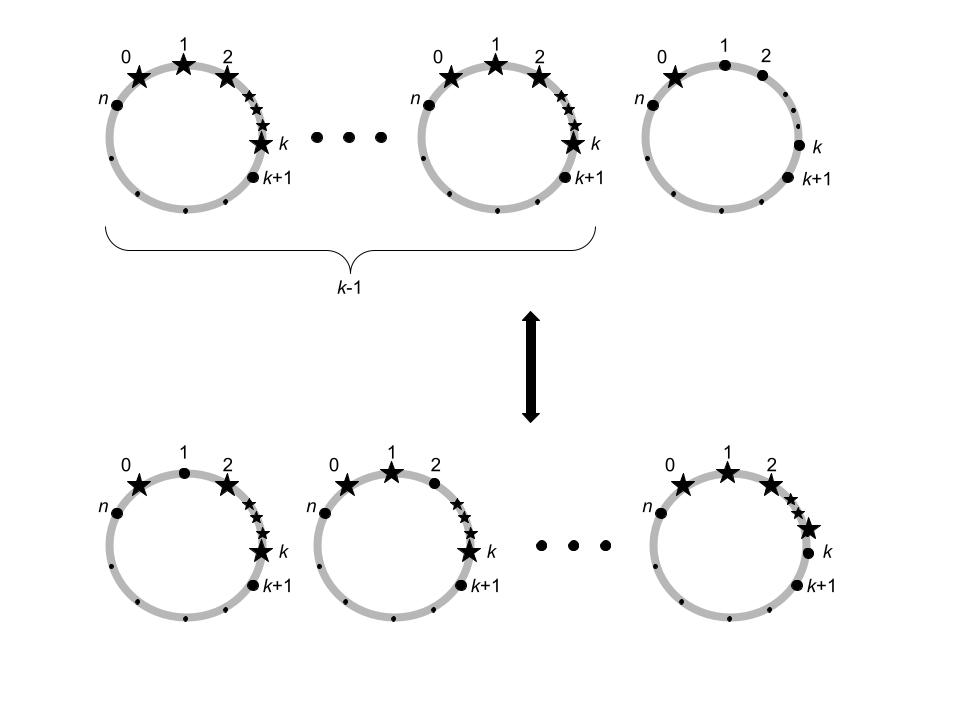}
    \caption{The generalized Red Hot Potato algorithm is a bijection between the two sets illustrated above. The stars represent roots of trees and have no edges coming out of them. The simple vertices each have one edge leaving.}
    \label{genrhpfig}
\end{figure}

For the column expansion identity, we start with a pair of graphs from the left-hand side, one of which is a tree and one of which is a $(k+1)$-forest. When applying the column expansion identity to the problem of finding quadratic spanning forest identities, we will be thinking of the resulting forests as undirected, but the Red Hot Potato algorithm requires directed forests. However, since the column expansion identity itself is coming from a set of matrix determinants, for the purposes of proving the column expansion identity, we can think of these graphs as directed by replacing $M$ with a Laplacian that already has the $0$th row and $0$th column removed. In this way, we will start with a tree rooted at $0$ and a $(k+1)$- forest rooted at $0$ through $k$. To apply the generalized Red Hot Potato algorithm, we need $k-1$ forests rooted at $0$ through $k$. We will union in $k-2$ more forests, all of which consist of no edges coming out of vertices $0$ through $k$ and one edge from vertex $\ell$ to vertex $0$ for all $\ell>k$ (Figure \ref{colexpsetup}).

\begin{figure}
    \centering
    \includegraphics[scale=.4]{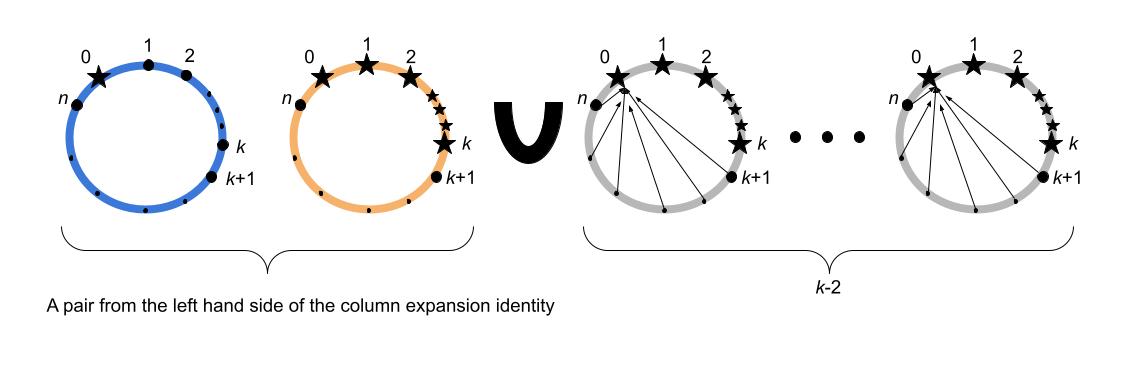}
    \caption{The pair on the left come from the left-hand side of the column expansion identity. The non-starred vertices in these two graphs each has one edge coming out of it that could go to any other vertex. The $k-2$ forests on the right are ``dummy" forests to allow the application of the Red Hot Potato algorithm.}
    \label{colexpsetup}
\end{figure}

The forests need to be ordered for the algorithm to work. We will order these so that our original $(k+1)$-forest is the $j$th one out of the all of the $(k+1)$-forests, where $j$ is the fixed column that we are expanding along in the column expansion identity.

We now have a tree and $k-1$ ordered $(k+1)$-forests, which is what we need to perform the generalized Red Hot Potato algorithm. Do so. We know that we will finish with $k$ ordered $k$-forests. In particular, it turns out that, with the exception of the $j$th forest and the $k$th forest (which were what we originally started with), forest $i$ will have an edge out of $i$, which originated from the original tree, and edges out of vertices $k+1,\dots n$ all going to $0$. In fact,  the edges in the ``dummy" forests do not actually move during the algorithm (Figure \ref{colexpreorder}).

\begin{figure}
    \centering
    \includegraphics[scale=.3]{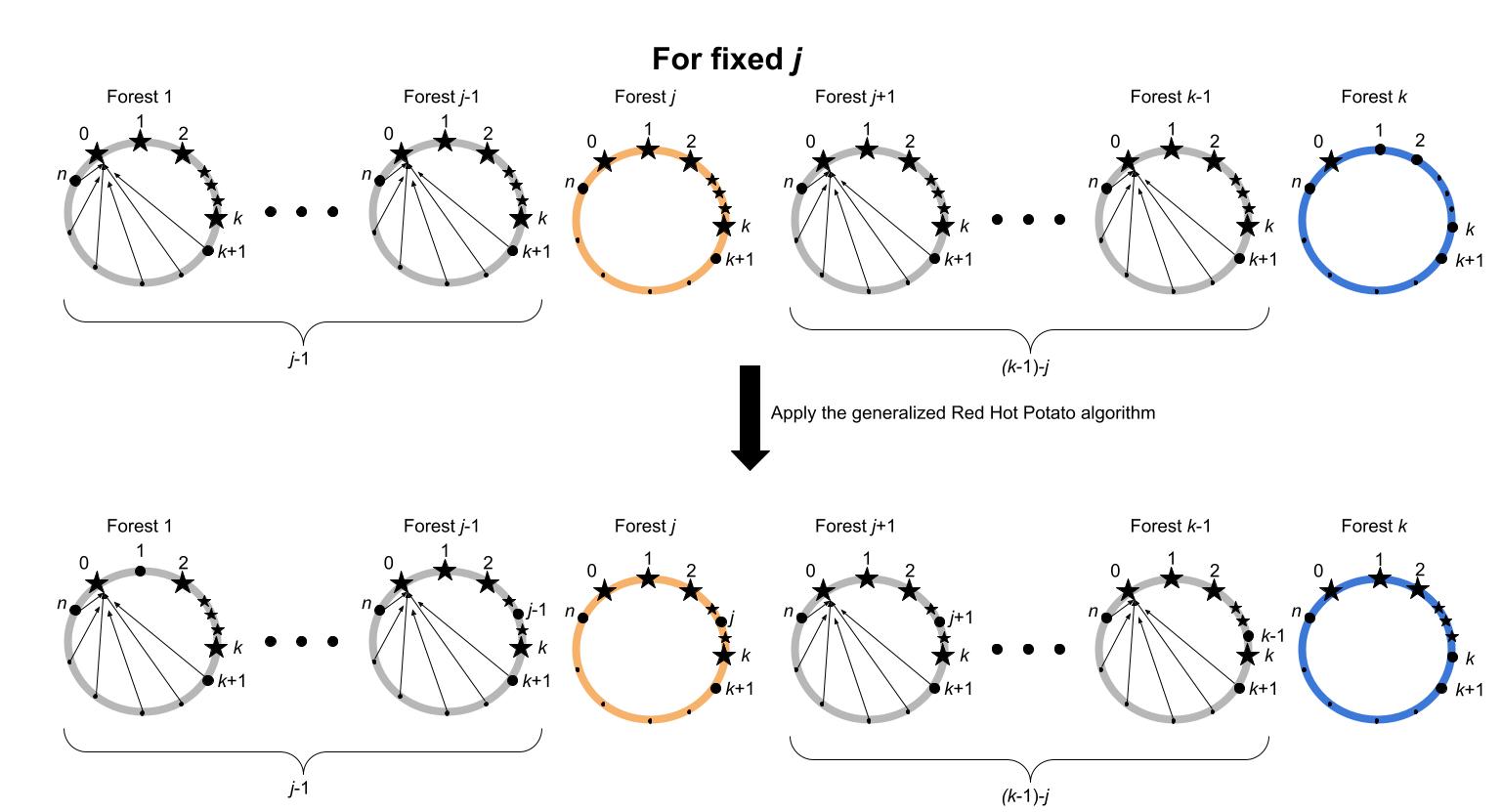}
    \caption{The top is the result of re-ordering our $k$ forests. The bottom is the set of $k$ forests after applying the generalized Red Hot Potato algorithm. Any non-starred vertex without an edge specifically drawn in has one edge coming out of it that could go to any other vertex.}
    \label{colexpreorder}
\end{figure}

    Finally, we are going to remove the $j$th forest. This corresponds with $\det(M_{[k]\setminus\{j\},[k]\setminus\{j\}})$. That is the second determinant in the column expansion identity. It must have $j$ as both the row and column in the determinant because when the indices do not match, the forest is forbidden and gets subtracted. Once we have removed the $j$th forest, we can think of $j$ as a special root similar to $0$: there are no edges coming out of it in any of the forests (since originally there was only one edge total coming out of $j$, and it ended up in the $j$th forest), so we are effectively ignoring it. Then we have $k-1$ ordered $(k-1)$-forests (if we ignore $j$), each forest $i$ with no edge out of $0,\dots k$ except for an edge out of $i$. This is what we need to do the generalized Red Hot Potato algorithm, so we do it. 
    
    We end with one tree with $0$ as a root (technically this is actually a $2$-forest with $0$ and $j$ as roots), and $k-2$ forests with no edges coming out of $0,1,\dots k$. In particular we claim that these forests have all the edges pointed to $0$. The ``tree" is $\det(M_{j,j})$ in the right-hand side of the identity. The remaining $k-2$ forests are identical to the $k-2$ forests that we originally added in, so we remove them again, leaving us with our ``tree" that corresponds to $\det(M_{j,j})$ and our $j$th forest that corresponds to $\det(M_{[k]\setminus\{j\},[k]\setminus\{j\}})$. This is the right hand side of the column expansion identity (Figure \ref{colexpfinal}). Since all we have actually done is apply the generalized Red Hot Potato algorithm twice, and we already know that this is a bijection, then our whole process was a bijection and we have proved the column expansion identity combinatorially. It is not hard to prove that the $k-2$ forests that we added at the beginning end up the same at the end (i.e. that the edges going from $\ell$ to $0$ for all $\ell>k$ do not get moved around). However, this involves going in depth into the definition of the generalized Red Hot Potato algorithm, which is outside the scope of this paper.
    
    \begin{figure}
        \centering
        \includegraphics[scale=.3]{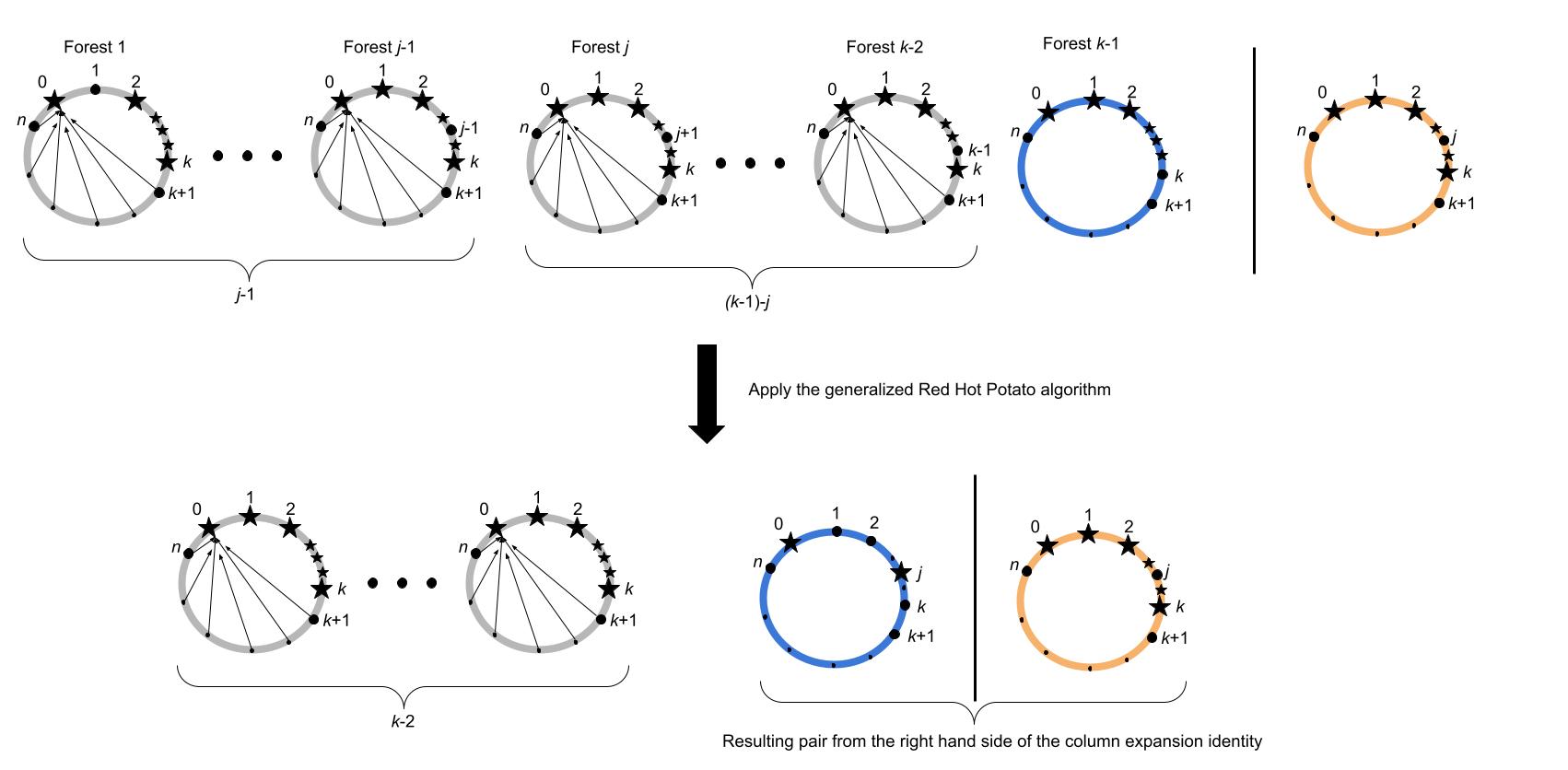}
        \caption{We remove the orange forest $j$ (placed here on the right hand side of the bar) and apply the algorithm to the remaining forests. The resulting blue and orange forests are a pair from the right hand side of the column expansion identity, and the extra $k-2$ gray forests are the same ``dummy" forests that we added at the beginning.}
        \label{colexpfinal}
    \end{figure}

\section{Quadratic Spanning Forest Identities}

Let us begin our discussion of quadratic spanning forest identities by looking at the case arising from the classical Dodgson identity. This is the case when the number of marked vertices is $m=3$.
We will show that the Dodgson identity viewed in this way is consistent with Conjecture \ref{VYConj}.  In some ways this case is unusual because certain things which are distinct in general are not distinct in this situation.

When we look at the column expansion identities for $k=2$, we get the following two identities: 
\begin{align*}
\det M \det M_{\{1,2\},\{1,2\}} &= \det M_{\{1\}, \{1\}} \det M_{\{2\}, \{2\}} - \det M_{\{2\}, \{1\}} \det M_{\{1\}, \{2\}}\\
\det M \det M_{\{1,2\},\{1,2\}} &= \det M_{\{2\}, \{2\}} \det M_{\{1\}, \{1\}} - \det M_{\{1\}, \{2\}} \det M_{\{2\}, \{1\}}
\end{align*}

Notice that both of these identities are the Dodgson identity, with just the order of the determinants switched. Typically, the order would not matter since multiplication is commutative. However, for the purposes of the conjecture we are not counting determinantal identities per se, but rather identities formed by sums of $AB$ pairs. As we will see below, when we translate these into identities of $A$ and $B$ partitions, we get different $AB$ pairs from different orders.  As described in Section~\ref{sec comb int}, we can replace $M$ with a Laplacian with a row and column already removed. Since by Lemma \ref{row} the row removed does not affect the outcome, when $m=3$ we merely need to decide which of the first three columns of the Laplacian to remove for each of the two identities. We go into more detail about how to obtain identities from specific Laplacians in the next section; following that method, we have six identities:

\begin{align*}
   \mathcal{L}_{3,1}(1): (1,1,1)(1,2,3)&=(1,2,2)(1,1,2)+(1,2,1)(1,1,2)+(1,2,1)(1,2,2)\\
   \mathcal{L}_{3,1}(2):(1,1,1)(1,2,3)&=(1,1,2)(1,2,2)+(1,1,2)(1,2,1)+(1,2,2)(1,2,1)\\
   \mathcal{L}_{3,2}(1): (1,1,1)(1,2,3)&=(1,2,2)(1,1,2)+(1,2,2)(1,2,1)+(1,2,1)(1,1,2)\\
   \mathcal{L}_{3,2}(2):(1,1,1)(1,2,3)&=(1,1,2)(1,2,2)+(1,2,1)(1,2,2)+(1,1,2)(1,2,1)\\
   \mathcal{L}_{3,3}(1):(1,1,1)(1,2,3)&=(1,1,2)(1,2,1)+(1,2,2)(1,2,1)+(1,2,2)(1,1,2)\\
   \mathcal{L}_{3,3}(2): (1,1,1)(1,2,3)&=(1,1,2)(1,2,2)+(1,2,1)(1,1,2)+(1,2,1)(1,2,2)
\end{align*}

As identities of spanning forest polynomials all six of these are the same identity. However, it is important to note that Conjecture \ref{VYConj} refers to identities that are written in the form of $AB$ partition pairs where $A$ partitions consist of $m-1$ parts and $B$ partitions consist of $2$ parts. In this case where $m=3$, $A$ partitions are the same as $B$ partitions. Thus, in this case, the order of the partitions matters. That is to say $(1,2,1)(1,1,2)$ is a different $AB$ partition pair than $(1,1,2)(1,2,1)$, since in the first case the $A$ partition is $(1,2,1)$ and the $B$ partition is $(1,1,2)$ while in the second case the reverse is true.  All six of the identities given above are distinct by this measure.

Explicitly, if we write $A_1 = (1,2,2) = B_1$, $A_2 = (1,2,1) = B_2$, and $A_3 = (1,1,2) = B_3$ then the six identities are:

\begin{align*}
   \mathcal{L}_{3,1}(1): (1,1,1)(1,2,3)&=A_1B_3+A_2B_3+A_2B_1\\
   \mathcal{L}_{3,1}(2):(1,1,1)(1,2,3)&=A_3B_1+A_3B_2+A_1B_2\\
   \mathcal{L}_{3,2}(1): (1,1,1)(1,2,3)&=A_1B_3+A_1B_2+A_2B_3\\
   \mathcal{L}_{3,2}(2):(1,1,1)(1,2,3)&=A_3B_1+A_2B_1+A_3B_2\\
   \mathcal{L}_{3,3}(1):(1,1,1)(1,2,3)&=A_3B_2+A_1B_2+A_1B_3\\
   \mathcal{L}_{3,3}(2): (1,1,1)(1,2,3)&=A_3B_1+A_2B_3+A_2B_1
\end{align*}

If there were an identity of the form $(1,1,1)(1,2,3) = \sum \alpha_{i,j}A_iB_j$ for some coefficients $\alpha_{i,j}$ which was not in the span of the identities above, then this new identity would be true on every graph.  In particular it would be true on the complete graph on three vertices.  Labelling the edge from $2$ to $3$ by $a$, from $1$ to $3$ by $b$ and from $1$ to $2$ by $c$, as in Figure \ref{fig triangle}, we would have $(1,2,2) = bc$, $(1,1,2) = ab$, $(1,2,1)=ac$, $(1,1,1) = a+b+c$, and $(1,2,3) = abc$.  So $(1,1,1)(1,2,3) = (a+b+c)(abc) = a^2bc + ab^2c + abc^2$.  However, each of the terms in this expansion can only be factored into squarefree monomials of degree 2 (which the required partitions give as their polynomials) in one way: $(a+b+c)(abc) = (ab)(ac) + (ab)(bc) + (ac)(bc)$, and assigning these factors as $A$s and $B$s we get exactly the six identities above.

However, viewing the identities as polynomials in the variables $A_i$ and $B_j$, the six identities given above are not linearly independent: the right hand sides of each pair that comes from the same Laplacian have the same sum (i.e. $L_{3,1}(1)+L_{3,1}(2)=L_{3,2}(1)+L_{3,2}(2)=L_{3,3}(1)+L_{3,3}(2)$, and all three of these sums equals $A_1B_2+A_1B_3+A_2B_1+A_2B_3+A_3B_1+A_3B_2$). 

To check the conjecture in this case it remains to count the degrees of freedom.  First homogenize so as to translate the solutions to the origin where they form a subspace -- we can do so by subtracting any of the six equations from the others, leaving five equations.  Then, $L_{3,1}(1)+L_{3,1}(2)=L_{3,2}(1)+L_{3,2}(2)=L_{3,3}(1)+L_{3,3}(2)$ gives two identities, leaving a space of dimension $3=m(m-2)$ as the conjecture states.

Note that when $m>3$ the identities will be different as spanning forest identities not just as sums of $AB$ pairs, since the $A$s will be distinct from the $B$s.  

\medskip

With this example under our belts, it is a good time to rephrase the conjecture more formally.  Let $m$ be an integer at least 3.  Let $a_m$ be the number of set partitions of $\{1,2,\ldots, m\}$ into $m-1$ parts and let $b_m$ be the number of set partitions of $\{1,2,\ldots, m\}$ into 2 parts, and $\{A_i\}_{i=1}^{a_m}$ and $\{B_j\}_{j=1}^{b_m}$ be the sets of these partitions in some order which we now take to be fixed.  The original conjecture asked about the number of free variables in the most general expression of the form $(1,1,\ldots, 1)(1,2,\ldots, m) = \sum_{i,j}\alpha_{i,j}A_iB_j$ which is true on any graph when the set partitions are interpreted as spanning forest polynomials.

This is asking for the solution to an inhomogeneous linear system, so homogenizing by subtracting any particular solution (and we have many explicit particular solutions since each column expansion identity gives one by Lemma~\ref{lem form}), the question is asking about the dimension of the vector space of expressions of the form $0=\sum_{i,j}\alpha_{i,j}A_iB_j$ which are true on any graph when the set partitions are interpreted as spanning forest polynomials.  More formally we can rephrase this as follows.

Let $V_m$ be the vector space generated by monomials $A_iB_j$.  Define the subspace $X_m$ of $V_m$ as follows. For any graph $G$ with $m$ marked vertices we have a linear map from $V$ to a vector space of polynomials given by evaluating each set partition as its corresponding spanning forest polynomial on $G$.  The kernel of this map is a subspace of $V_m$ and the intersection of these kernels running over all graphs with $m$ marked vertices also gives a subspace; this latter subspace is $X_m$.

\begin{conjecture}[Conjecture~\ref{VYConj} rephrased]\label{conj rephrased}
The dimension of $X_m$ is $m(m-2)$.  

Furthermore, there is at least one identity of the form $(1,1,\ldots, 1)(1,2,\ldots, m) = \sum_{i,j}\alpha_{i,j}A_iB_j$ which is true on any graph with $m$ marked vertices, and hence the number of free variables in the most general such expression is the dimension of $X_m$.
\end{conjecture}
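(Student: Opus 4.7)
The plan is to combine an explicit construction of $m(m-2)$ linearly independent elements of $X_m$ from the column expansion identities with a matching upper bound obtained by evaluating on a carefully chosen graph.

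By Lemma~\ref{lem form}, for each pair $(c,j)\in[m]\times[m-1]$ the column expansion identity $\mathcal{L}_{c,c}(j)$ has the form $(1,\ldots,1)(1,2,\ldots,m)=L_{c,c}(j)$, which immediately settles the second (existence) assertion of the conjecture; by Lemma~\ref{row} the row choice is immaterial, so we obtain $m(m-1)$ particular solutions. Subtracting one fixed identity from each of the others yields $m(m-1)-1$ homogeneous elements of $X_m$. For the lower bound $\dim X_m\geq m(m-2)$, the key step is to identify exactly $m-1$ relations among these, generalizing the $m=3$ observation that the partial sums $S_c:=\sum_{j=1}^{m-1}L_{c,c}(j)$ coincide for all $c\in[m]$. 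Using Lemma~\ref{exactmonomial} together with a case analysis on permissibility, an $AB$ pair whose $A$ groups the vertices $\{a,b\}$ contributes to $S_c$ exactly when $a$ and $b$ lie in different parts of $B$, and crucially the position of $c$ plays no role in this count. This yields $m-1$ relations $S_1=\cdots=S_m$; a rank computation on the incidence matrix of permissible monomials versus $(c,j)$ pairs will confirm these are the only dependencies, giving a span of dimension exactly $m(m-1)-1-(m-1)=m(m-2)$.

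The main obstacle is the matching upper bound $\dim X_m\leq m(m-2)$. The plan is to evaluate on a sufficiently generic graph $G^\ast$—for instance a large complete graph with indeterminate edge weights and many unmarked auxiliary vertices—so that the spanning forest polynomials are free of the graph-specific coincidences (like the $A_i=B_i$ collapse on the triangle for $m=3$) that would artificially inflate $\ker f_{G^\ast}$. Using the fact that spanning forest polynomials attached to distinct set partitions are linearly independent on sufficiently rich graphs (as developed in \cite{brown}), one aims to exhibit, for each coset representative of $V_m$ modulo the explicit $m(m-2)$-dimensional relation subspace, a distinguishing edge-variable monomial appearing in only one $A_iB_j$. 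The challenge is the combinatorial explosion of monomials across all $\binom{m}{2}(2^{m-1}-1)$ quadratic products and the overlap between spanning forests contributing to different products; I expect the cleanest route is induction on $m$, specializing an edge variable to reduce to the $(m-1)$-marked case where the result is known, then controlling the rank gain when the edge is restored.
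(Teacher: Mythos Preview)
Your existence claim and the identification of the block relations $S_c := \sum_{j} L_{c,c}(j)$ being independent of $c$ are correct and match Theorem~\ref{monomial}. However, both halves of the dimension count have genuine gaps.

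For the lower bound, asserting that linear independence follows from ``a rank computation on the incidence matrix'' is not a proof; you would need to carry out that computation uniformly in $m$, and it is not obvious how to do so. The paper's Proposition~\ref{prop lin ind} instead argues by contradiction in $V_m/\langle J\rangle$: assuming a nontrivial dependence, it shifts coefficients via block sums so that in some block one $L_{m,c_1}(j_1)$ has positive coefficient while $L_{m,c_1}(\ell_1)$ has coefficient zero, and then tracks the monomials with $A$-part $A_{j_1,\ell_1}$ through the three possible alternative sources to show they cannot all be recovered without overcounting. This is a targeted combinatorial argument, not a generic rank check.

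For the upper bound, your primary approach via a single generic graph $G^\ast$ is exactly the ad hoc method of \cite{vlasev} for $m=4$, which the paper explicitly abandons as non-generalizable (see the discussion following \eqref{eq in terms of L}). Your fallback to induction on $m$ is the right direction and is what the paper ultimately does, but ``specializing an edge variable'' misses the actual mechanism. The paper (Lemma~\ref{firstdifference}) scales \emph{all} edges incident to a chosen marked vertex $i$ by a parameter $t$ and extracts the part linear in $t$; this forces $i$ to be a leaf in one forest and isolated in the other, so that deleting $i$ yields a genuine $AB$ identity on $m-1$ marked vertices to which the inductive hypothesis applies. Lifting back to $G$ and subtracting leaves a structured residual of the form $\sum \beta\, A_{\neq i}(B^{P_3 i,P_4}-B^{P_3,P_4 i})$. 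The nontrivial endgame, after repeating with two further vertices $x$ and $y$, is to show these residuals vanish: Lemma~\ref{ispecial} pins down that only $L$s with column $i$ removed can contribute to such a difference, and Lemma~\ref{zerocoeff} plays $i$ against $x$ to force the coefficient to zero. None of this structure is present in your sketch, and without it the induction does not close.
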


As well as proving that the dimension matches the conjecture we will give an explicit basis built from column expansion identities for each $m$.

\subsection{Quadratic spanning forest identities with $m=4$ marked vertices}\label{subsec 4 vert}

The case with $m=4$ marked vertices is more representative of the general case and is also the case studied in \cite{vlasev}.  Recall the indexing for the $A_i$ and $B_j$ for $m=4$ as given in Section~\ref{subsec set up} 

$$\begin{array}{ccc}
    A_1=(1,1,2,3) & A_2=(1,2,1,3) & A_3=(1,2,2,3) \\
    A_4=(1,2,3,1) & A_5=(1,2,3,2) & A_6=(1,2,3,3)
\end{array}
$$

$$\begin{array}{ccc}
    B_1=(1,1,1,2) & B_2=(1,1,2,1) & B_3=(1,2,1,1) \\
    B_4=(1,2,2,2) & B_5=(1,1,2,2) & B_6=(1,2,1,2) \\
    & B_7=(1,2,2,1). &
\end{array}
$$

Vlasev and the second author \cite{vlasev} discovered the following identity:

\begin{theorem}\label{VY}
\begin{equation*}
\begin{split}
    (1,1,1,1)(1,2,3,4)=&(1-x_1-x_2)A_4B_1+x_7A_2B_4+(1-x_3-x_2)A_5B_1\\
    &+(1-x_1-x_4)A_6B_1+x_2A_2B_2+(x_3+x_2-x_5)A_3B_2\\
    &+(1-x_1-x_6)A_6B_2+x_1A_1B_3+(x_1-x_7+x_4)A_3B_3\\
    &+(x_1-x_8+x_6)A_5B_3+x_5A_1B_4+(x_1-x_5+x_4)A_3B_5\\
    &+(x_1-x_5+x_6)A_5B_5+x_3A_1B_6+(x_3+x_2-x_7)A_3B_6\\
    &+(1-x_1-x_2+x_8-x_6)A_4B_6+(x_2+x_7-x_4)A_2B_7\\
    &+(1-x_1-x_7+x_8-x_6)A_6B_6+(x_1+x_5-x_3)A_1B_7\\
    &+(1+x_5-x_3-x_2-x_8)A_5B_7\\
    &+(1-x_1+x_7-x_4-x_8)A_6B_7\\
    &+x_8A_4B_4+x_4A_2B_5+x_6A_4B_5
    \end{split}
\end{equation*}

holds for all $x_1,x_2,\dots,x_8$, and all identities of the form $(1,1,1,1)(1,2,3,4) = \sum \alpha_{i,j} A_i, B_j$ are special cases of this one.
\end{theorem}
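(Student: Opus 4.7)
The plan is to realize Theorem \ref{VY} as the explicit parametrization of an affine subspace of $V_4$ produced by the column expansion identities of Section~2. For $m=4$ we have $k=3$, so by Theorem~\ref{columnexpansion} and Lemma~\ref{row} we may restrict attention to the identities $\mathcal{L}_{c,c}(j)$ with $c \in \{1,2,3,4\}$ and $j \in \{1,2,3\}$. This produces $12$ explicit identities, each of the shape $(1,1,1,1)(1,2,3,4) = L_{c,c}(j)$, where by Lemma~\ref{lem form} each right hand side is a signed sum of permissible $AB$ pairs. Every such identity is therefore a particular solution of the inhomogeneous linear equation defining the affine space of identities we want to describe.

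Existence of the $8$-parameter family: I would fix one of the twelve identities, say $\mathcal{L}_{4,4}(1)$, as a base point, and subtract it from each of the other eleven. This yields $11$ homogeneous relations
\[
0 = \sum_{i,j} \beta^{(c,j)}_{i,j} A_i B_j
\]
that lie in $X_4$. Writing each such relation as a vector in the $42$-dimensional space $V_4$ with basis $\{A_i B_j\}$ and performing Gaussian elimination, I expect to find that these $11$ vectors span an $8$-dimensional subspace $Y_4 \subseteq X_4$; choosing a basis $e_1,\ldots,e_8$ for $Y_4$, the affine family $L_{4,4}(1) + \sum_{k=1}^{8} x_k e_k$ then gives an $8$-parameter family of valid identities. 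Matching the basis to a judicious choice of $AB$ pairs (for instance pinning down $A_4B_1, A_2B_4, A_5B_1, A_6B_1, A_2B_2, A_1B_3, A_1B_4, A_1B_6$ as the pivots, which correspond roughly to the ``$1-$'' and bare-$x_k$ coefficients in Vlasev and Yeats's formula) then recovers the explicit identity stated in Theorem~\ref{VY} after a linear change of the $x_k$.

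Maximality: to close the theorem I must also argue that $\dim X_4 \leq 8$, since otherwise the claim that ``all identities are special cases of this one'' could fail. Here my approach would be to evaluate each of the $42$ monomials $A_i B_j$ as an honest spanning forest polynomial on a concrete test graph with $4$ marked vertices---for example, the complete graph $K_4$ with its four vertices marked, possibly supplemented by a second small graph such as $K_5$ with one unmarked vertex---and show that the resulting $42$ polynomials span a subspace of the polynomial ring of dimension at least $34$. Since $X_4$ is contained in the kernel of the evaluation map on any such graph, this would force $\dim X_4 \leq 42 - 34 = 8$, matching the lower bound produced in the previous step.

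The main obstacle is the maximality step: linear independence of spanning forest polynomials is not automatic, and picking a test graph (or a finite collection of test graphs) where the rank is provably at least $34$ requires either a moderately involved direct calculation or a symbolic verification. I expect the authors to bypass this by proving the stronger general statement $\dim X_m = m(m-2)$ (the rephrased Conjecture~\ref{conj rephrased}) and recovering $m=4$ as the special case, so in practice the $m=4$ theorem is cleanest if treated as an explicit parametrization of the $8$-dimensional family produced above, with the dimension bound inherited from the $m$-vertex theorem proved in Section~3.
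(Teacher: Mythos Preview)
Your existence argument via the twelve $\mathcal{L}_{c,c}(j)$ identities and an invertible change of the free variables is exactly what the paper carries out in Section~3.1: it lists all twelve identities, records for each the specialization of $(x_1,\ldots,x_8)$ that recovers it, and then exhibits (in the displayed equation labelled \eqref{eq in terms of L}) the full $8$-parameter family as an explicit affine combination of the $L_{r,c}(j)$, together with the invertible change of variables $y_i \leftrightarrow x_i$. On this half your plan and the paper coincide.

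For maximality, your proposed route---evaluate the $42$ monomials $A_iB_j$ on a concrete test graph and verify that the kernel of the evaluation map has dimension at most $8$---is precisely the method of the original proof in \cite{vlasev}, which the paper summarizes but does not repeat. (A word of caution: $K_4$ is almost certainly too small; \cite{vlasev} speaks of ``particular large graphs.'') The present paper does not give an independent argument for the upper bound at $m=4$; in Section~3.1 it simply cites Theorem~\ref{VY} as already established and bootstraps the linear-independence check for the basis \eqref{eq basis} off of it.

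The one point where your expectation is inverted is the direction of the logical dependency. You suggest the $m=4$ upper bound might be ``inherited from the $m$-vertex theorem proved in Section~3.'' In fact the opposite holds: the induction in Theorem~\ref{finaltheorem} (which underlies the proof that $\dim X_m = m(m-2)$) takes $m=4$ as its \emph{base case} and refers back to Section~3.1, which in turn rests on Theorem~\ref{VY} as proved in \cite{vlasev}. So the test-graph computation for $m=4$ is not bypassed by the general machinery---it is the foundation on which the general result stands. Your instinct to settle the upper bound by direct evaluation is therefore not merely one option but the option the paper ultimately depends on.
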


Though this theorem covers all possible identities of this form, Vlasev and the second author did not have a combinatorial proof, nor a proof that generalized to $m>4$. We will use the column expansion identities to remedy both of these problems.  In this subsection we will consider how to use the column expansion identities to give a more conceptual and in principle combinatorial (thanks to Section~\ref{subsec combi}) reformulation of this identity, while the subsequent subsections will prove the generalization.

In order to obtain an identity, we can use the column expansion identities for $m=4$, replacing $M$ with the Laplacian for a complete graph with a row and column already removed. As established in Lemma \ref{row}, the resulting identity is not impacted by the row removed, so for the purposes of consistency, we will always remove the fourth row.

\begin{example}
We will show in detail how to attain $\mathcal{L}_{4,4}(1)$. We begin by applying the first column expansion identity to the Laplacian with the fourth row and fourth column removed. This gives us:

\begin{align*}
    \det(L_{4,4})\det(L_{1234,1234})&=\det(L_{14,14})\det(L_{234,234})-\det(L_{24,14})\det(L_{134,234})+\det(L_{34,14})\det(L_{124,234})
\end{align*}

We now apply the matrix tree theorem to these determinants. Recall that the generalized version of the matrix tree theorem \cite{chaiken} says that the determinant of the Laplacian with $k$ rows (or columns) removed is given by those forests with $k$ trees so that each tree contains exactly one index of the rows removed, and exactly one index of the columns removed. Thus, for example, $\det(L_{24,14})$ corresponds to forests with two trees, one of which contains the vertex $4$, and one of which contains the vertices $1$ and $2$. Vertex $3$ is not removed from either rows or columns, so it can belong to either tree. In partition notation, $\det(L_{24,14})$ corresponds to $(1,1,-,2)$. If we apply this to all of the determinants in our identity above, we get the following (we switch the right-most plus to a minus because the determinant product is negative):

\begin{align*}
    \mathcal{L}_{4,4}(1): (1,1,1,1)(1,2,3,4)&=(1,-,-,2)(-,1,2,3)-(1,1,-,2)(1,1,2,3)-(1,-,1,2)(1,2,1,3)\\
    &= (B_1+B_4+B_5+B_6)(A_1+A_2+A_4)-(B_1+B_5)A_1-(B_1+B_6)A_2\\
    &=A_4(B_1+B_4+B_5+B_6)+A_1(B_4+B_6)+A_2(B_4+B_5)
\end{align*}
\end{example}

If we follow the same process for all other combinations of column removed and column identity applied, we obtain the following identities (the $x_i$ values under each identity show what each $x_i$ in Theorem \ref{VY} would need to be set to in order to obtain the identity):

\allowdisplaybreaks
\begin{align*}
    \mathcal{L}_{4,4}(1):(1,1,1,1)(1,2,3,4)=A_4(B_1&+B_4+B_5+B_6)+A_1(B_4+B_6)+A_2(B_4+B_5)\\
    &x_1=x_2=0, \text{ other } x_i=1\\
    \mathcal{L}_{4,4}(2):(1,1,1,1)(1,2,3,4)=A_5(B_1&+B_3+B_5+B_7)+A_1(B_3+B_7)+A_3(B_3+B_5)\\
    &x_1=1, \text{ other } x_i = 0\\
    \mathcal{L}_{4,4}(3):(1,1,1,1)(1,2,3,4)=A_6(B_1&+B_2+B_6+B_7)+A_2(B_2+B_7)+A_3(B_2+B_6)\\
    &x_2=1, \text{ other } x_i = 0\\
    \mathcal{L}_{4,3}(1):(1,1,1,1)(1,2,3,4)=A_2(B_2&+B_4+B_5+B_7)+A_1(B_4+B_7)+A_4(B_4+B_5)\\
    &x_1=x_3=0, \text{ other } x_i = 1\\
    \mathcal{L}_{4,3}(2):(1,1,1,1)(1,2,3,4)=A_3(B_2&+B_3+B_5+B_6)+A_1(B_3+B_6)+A_5(B_3+B_5)\\
    &x_1=x_3=1, \text{ other } x_i = 0\\
    \mathcal{L}_{4,3}(3):(1,1,1,1)(1,2,3,4)=A_6(B_1&+B_2+B_6+B_7)+A_4(B_1+B_6)+A_5(B_1+B_7)\\
    &x_i=0\hspace{2mm}\forall i\\
    \mathcal{L}_{4,2}(1):(1,1,1,1)(1,2,3,4)=A_1(B_3&+B_4+B_6+B_7)+A_2(B_4+B_7)+A_4(B_4+B_6)\\
    &x_2=x_4=x_6=0, \text{ other }x_i = 1\\
    \mathcal{L}_{4,2}(2):(1,1,1,1)(1,2,3,4)=A_3(B_2&+B_3+B_5+B_6)+A_2(B_2+B_5)+A_6(B_2+B_6)\\
    &x_2=x_4=1, \text{ other } x_i=0\\
    \mathcal{L}_{4,2}(3):(1,1,1,1)(1,2,3,4)=A_5(B_1&+B_3+B_5+B_7)+A_4(B_1+B_5)+A_6(B_1+B_7)\\
    &x_6=1, \text{ other } x_i = 0\\
    \mathcal{L}_{4,1}(1):(1,1,1,1)(1,2,3,4)=A_1(B_3&+B_4+B_6+B_7)+A_3(B_3+B_6)+A_5(B_3+B_7)\\
    &x_1=x_3=x_5=1, \text{ other } x_i = 0\\
    \mathcal{L}_{4,1}(2):(1,1,1,1)(1,2,3,4)=A_2(B_2&+B_4+B_5+B_7)+A_3(B_2+B_5)+A_6(B_2+B_7)\\
    &x_2=x_4=x_7=1, \text{ other } x_i = 0\\
    \mathcal{L}_{4,1}(3):(1,1,1,1)(1,2,3,4)=A_4(B_1&+B_4+B_5+B_6)+A_5(B_1+B_5)+A_6(B_1+B_6)\\
    &x_6=x_8=1, \text{ other } x_i = 0
\end{align*}

{}From the above we see that the identity of Theorem~\ref{VY} implies each of the column expansion identities for $m=4$.  In the other direction, the column expansion identities imply the identity of Theorem~\ref{VY} because
\begin{equation}\label{eq in terms of L}
\begin{split}
    &(1,1,1,1)(1,2,3,4)\\
    &=L_{4,3}(1) + L_{4,3}(2) - L_{4,3}(3) \\
    &\quad +y_1(L_{4,4}(1) - L_{4,2}(1)) + y_2(L_{4,4}(1)-L_{4,3}(1)) + y_3(L_{4,4}(2) - L_{4,3}(2)) + y_4(L_{4,4}(3)-L_{4,2}(2)) \\
    &\quad +y_5(L_{4,4}(2) - L_{4,1}(1)) + y_6(L_{4,3}(3) - L_{4,2}(3)) + y_7(L_{4,4}(3)-L_{4,1}(2)) + y_8(L_{4,3}(3) - L_{4,1}(3))
\end{split}     
\end{equation}
is the identity of Theorem~\ref{VY} where we have used the invertible change of variables $y_3 = 1-x_3 -x_2 + x_5$, $y_4 = 1-x_4-x_1+x_7$, $y_6=1-x_6-x_1+x_8$ and $y_i=1-x_i$ for $i=1,2,5,7,8$ to make it tidier.

In \cite{vlasev}, the proof that this was the most general identity of the form $(1,1,1,1)(1,2,3,4) = \sum \alpha_{i,j}A_iB_j$ and hence that there are 8 free variables in this identity (which is as it should be according to the conjecture), was done as follows.  Any identity true of all graphs is also true of particular large graphs.  For some particular large graphs the $A_i$ and $B_j$ were computed explicitly as was $(1,1,1,1)$ and $(1,2,3,4)$ and the general linear equation relating them was solved.  This is what first gave the identity of Theorem~\ref{VY}, and this argument shows that there can be no more than 8 free variables, though there could be fewer if some of the relations which are true on the particular large graphs are not true in general.  The next step in the proof of \cite{vlasev}, then, was to prove the identity from other known determinantal identities, showing that there were in fact no spurious identities from the particular large graphs and hence that Theorem~\ref{VY} holds for all graphs.

This proof does not readily generalize as the determinantal manipulations and the large explicit graphs used there were ad-hoc.  The arguments from the beginning of this section show that the column expansion identities imply the identity of Theorem~\ref{VY}, but further, the column expansion identities explain the number of free variables, as we will show in the remainder of this section.

The twelve $\mathcal{L}_{r,c}(\ell)$ identities for $m=4$ fall into four natural groupings based on the column removed from the Laplacian. Notice that when we add the right hand sides of the identities for each of these groupings (for example $L_{4,4}(1)+L_{4,4}(2)+L_{4,4}(3)$), we get the same sum, namely the one in which all eight of the variables $x_i$ are set to $1$. This gives us exactly one of every non-forbidden $A_iB_j$ monomial. 
Because we know that each of the four groupings of $L_{r,c}(\ell)$ are equal, we note that we can write each of the following three $L_{r,c}(\ell)$ in terms of the fourth grouping:        
\begin{align*}
L_{4,3}(3) & = L_{4,4}(1) + L_{4,4}(2) + L_{4,4}(3) - L_{4,3}(1) - L_{4,3}(2)\\   
L_{4,2}(3) & = L_{4,4}(1) + L_{4,4}(2) + L_{4,4}(3) - L_{4,2}(1) - L_{4,2}(2) \\
L_{4,1}(3) & = L_{4,4}(1) + L_{4,4}(2) + L_{4,4}(3) - L_{4,1}(1) - L_{4,1}(2)
\end{align*}

We now want to understand the dimension of $X_4$ and hence the number of free variables in Theorem~\ref{VY}.  To do so, we homogenize by subtracting $L_{4,1}(2)$ from the rest and using the three equations above to see that $L_{4,3}(3) - L_{4,1}(2)$, $L_{4,2}(3)-L_{4,1}(2)$ and $L_{4,1}(3)-L_{4,1}(2)$ can be written in terms of the others.  This leaves us with 

\begin{equation}\label{eq basis}
\begin{gathered}
L_{4,4}(1) - L_{4,4}(3), \qquad  L_{4,4}(2) - L_{4,4}(3), \\
L_{4,3}(1) - L_{4,4}(3), \qquad  L_{4,3}(2) - L_{4,4}(3), \qquad  L_{4,2}(1) - L_{4,4}(3) \\
L_{4,2}(2) - L_{4,4}(3), \qquad  L_{4,1}(1) - L_{4,4}(3), \qquad L_{4,1}(2) - L_{4,4}(3)
\end{gathered}
\end{equation}

We know from Theorem~\ref{VY} that the dimension is 8, so provided these eight differences of $L_{r,c}(\ell)$ are linearly independent in the vector space of linear combinations of $A_iB_j$ monomials, we will have shown that \eqref{eq basis} is a basis of $X_4$.  Here we can simply bootstrap this off of Theorem~\ref{VY} by noticing that the change of variables matrix between the eight differences of \eqref{eq basis} and the eight differences in \eqref{eq in terms of L} is
\[
\begin{bmatrix}
     1 & 0 & 0 & 0 & -1 & 0 & 0 & 0 \\ 
     1 & 0 & -1 & 0 & 0 & 0 & 0 & 0 \\
     0 & 1 & 0 & -1 & 0 & 0 & 0 & 0 \\
    0 & 0 & 1 & 0 & 0 & -1 & 0 & 0 \\
     0 & 1 & 0 & 0 & 0 & 0 & -1 & 0 \\
     0 & 0 & -1 & -1 & 1 & 1 & 0 & 0 \\
    0 & 0 & 0 & 0 & 0 & 0 & 0 & -1 \\
    0 & 0 & -1 & -1 & 0 & 0 & 1 & 1 
\end{bmatrix}
\]
which is non-singular, and hence \eqref{eq basis} gives an explicit basis of $X_4$.

\subsection{Quadratic spanning forest identities for general $m$}

We just established the set of identities for $m=4$ marked vertices. By applying the same technique to generalized column expansion identities, we can find the quadratic spanning forest identities for any $m$. Vlasev and the second author \cite{vlasev} conjectured that there would be $m(m-2)$ free variables in spanning forest identities with $m$ marked vertices (Conjecture \ref{VYConj} or rephrased as Conjecture~\ref{conj rephrased}).

As with the case when $m=4$, in order to get a quadratic spanning forest identity on $m$ marked vertices, we must apply one of $m-1$ column expansion identities to a Laplacian with a row and column already removed. As we saw in Lemma \ref{row}, the row removed does not impact the resultant identity, so we only need to decide which of the $m$ marked columns to remove from the Laplacian, and which of the $m-1$ column expansion identities to apply. This gives us $m(m-1)$ identities. However, as with the case when $m=4$, these identities will be interrelated; the rest of this section will discuss how they are related.

To determine how the identities are related, we must recall from Lemma \ref{exactmonomial} that every permissible monomial will show up exactly one time in the identity. We will also need a new definition.

\begin{definition}
A \textbf{block} of identities is the set of $m-1$ identities that all have the same column removed.
\end{definition}

Recall that the $A$ partition of an $AB$ monomial places two of the marked vertices into one part of the partition, and leaves the rest of the marked vertices as singletons, each in their own part. We will denote by $A_{\ell,j}$ the $A$ partition that pairs vertices $\ell$ and $j$ into one part and leaves the rest of the marked vertices as singletons.

Let $c$ be the column removed from the Laplacian in a block of identities, and let $j$ be a marked vertex. We will define $j'=j$ if $j<c$ and $j'=j-1$ if $j>c$.  In other words $j'$ indexes the same row or column that $j$ did before removing $c$. Recall from Lemma \ref{exactmonomial} that every permissible monomial (and only permissible monomials) appears exactly once in $L_{m,c}(j')$; that is $j$ will be part of the pair in the $A$ partition (let us call its partner vertex $\ell$), and $j$ will be in a different part from both $c$ and $\ell$ in the $B$ partition. With this notation, we can state the following lemma.

\begin{lemma}\label{AC}
Let $c$ be the column removed from the Laplacian in a block of identities. Then a monomial containing the partition $A_{c,\ell}$ will appear once in the identity $L_{m,c}(\ell')$, and will not appear in any other identity in the block. The $B$ partition in this monomial will have $c$ and $\ell$ in different parts.
\end{lemma}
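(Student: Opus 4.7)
The plan is to deduce Lemma \ref{AC} directly from Lemma \ref{exactmonomial} (combined with Lemma \ref{row}, which lets us take the removed row to be $m$ throughout the block). Lemma \ref{exactmonomial} says that the right hand side of $L_{m,c}(j')$ consists of exactly one copy of every permissible monomial, and recall what permissibility means: in the $A$-partition of such a monomial the distinguished vertex $j$ is paired with some vertex, call it $\ell^*$, with every other marked vertex being a singleton, while in the $B$-partition $j$ lies in one part and $c$ and $\ell^*$ both lie in the other.

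First I would suppose that an $AB$-monomial whose $A$-partition is $A_{c,\ell}$ appears in some identity $L_{m,c}(j')$ of the block. The single non-singleton pair in $A_{c,\ell}$ is $\{c,\ell\}$, so by the permissibility requirement we must have $\{j,\ell^*\}=\{c,\ell\}$. Since column $c$ has already been removed before the column expansion is applied, $j\in[m]\setminus\{c\}$, so $j\neq c$; this forces $j=\ell$ and $\ell^*=c$. Translating back through the reindexing $j\mapsto j'$ (which is the identity outside of $c$), this says $j'=\ell'$. Hence the only identity in the block that can contribute a monomial containing $A_{c,\ell}$ is $L_{m,c}(\ell')$, and by Lemma \ref{exactmonomial} it contributes exactly one such monomial.

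To finish, I would read off the required $B$-statement from the same permissibility condition applied with $j=\ell$ and $\ell^*=c$: the vertex $\ell$ sits in one part of $B$ while $c$ (appearing as $\ell^*$) sits in the other. Thus any monomial containing $A_{c,\ell}$ that does appear in the block has $c$ and $\ell$ in different $B$-parts, as claimed.

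There is no substantial obstacle here; the lemma is essentially a re-indexing observation on top of Lemma \ref{exactmonomial}. The only point requiring care is the bookkeeping of the shift $j\leftrightarrow j'$ caused by the previously removed column $c$, which is why the statement writes $L_{m,c}(\ell')$ rather than $L_{m,c}(\ell)$; once that shift is tracked, the rest is immediate.
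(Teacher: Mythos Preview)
Your argument is correct and follows essentially the same route as the paper: both proofs reduce the claim to Lemma~\ref{exactmonomial} together with the observation that, since the removed column~$c$ cannot serve as the expansion index~$j$, the equality $\{j,\ell^*\}=\{c,\ell\}$ forces $j=\ell$. The paper's version is organized in the opposite order (first checking that such a monomial is permissible for $L_{m,c}(\ell')$, then ruling out the other identities), and it appeals briefly to the underlying determinants rather than working purely through the definition of permissibility, but the content is the same.
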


\begin{proof}
We start with an arbitary monomial containing $A_{c,\ell}$. By definition of permissible monomial, the $B$ partition of this monomial must have $c$ and $\ell$ in separate parts. We first show that it is possible for that monomial to appear in $L_{m,c}(\ell')$. If we look at the column expansion identity for $L_{m,c}(\ell')$, we will get, on the right hand side, $\det(L_{\{i,k+1\},\{\ell,c\}})$ for the $B$ part of the monomial. This monomial requires that $\ell$ and $c$ be in separate parts, so an accompanying $A$ that has $\ell$ and $c$ in the same part will be non-forbidden. By Lemma \ref{exactmonomial}, since it is possible for any monomial containing $A_{c,\ell}$ to show up in $L_{m,c}(\ell')$, then every monomial containing $A_{c,\ell}$ will show up exactly once in this identity. 

On the other hand, note that any monomial containing $A_{c,\ell}$ will not show up in any other identity in the block aside from $L_{m,c}(\ell')$. Suppose $A_{c,\ell}$ appears in the identity $L_{m,c}(j')$. Then $j$ must be part of the pair of $A$. Since $j$ cannot equal $c$ by definition of $L_{m,c}(j')$, then the only way for $A$ to have $c$ and $\ell$ in the same partition is if $\ell=j$.
\end{proof}

\begin{theorem}\label{monomial}
Every identity block sums to an identity with $m-1$ copies of $(1,1,\dots,1)(1,2,\dots,m)$ on the left hand side and exactly one copy of each permissible monomial on the right hand side.
\end{theorem}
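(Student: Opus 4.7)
The plan is to reduce the theorem directly to Lemma \ref{exactmonomial}. The left-hand side is immediate: each of the $m-1$ identities $\mathcal{L}_{m,c}(j')$, $j'\in[m-1]$, has left side $(1,1,\ldots,1)(1,2,\ldots,m)$, so summing produces $m-1$ copies. For the right-hand side, Lemma \ref{exactmonomial} says that $L_{m,c}(j')$ contributes exactly one copy of each monomial permissible for $(c,j)$. Summing over $j'\in[m-1]$ (equivalently $j\in[m]\setminus\{c\}$), the total multiplicity of a monomial $A_{i_1,i_2}B$ on the right-hand side equals the number of $j\in[m]\setminus\{c\}$ for which the monomial is permissible. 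The theorem thus reduces to showing that this count equals $1$ on every non-forbidden $AB$-monomial and $0$ on forbidden ones.

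For forbidden monomials the count is trivially $0$, since being permissible forces non-forbiddenness. For a non-forbidden monomial $A_{i_1,i_2}B$, permissibility requires $j$ to be one of the two vertices in the $A$-pair, so $j\in\{i_1,i_2\}$, and further requires $c$ to lie in the part of $B$ opposite to $j$, i.e.\ in the same part as the other pair-element $\ell$. I would then split on whether $c\in\{i_1,i_2\}$. If $c\in\{i_1,i_2\}$, say $c=i_1$, then $j\neq c$ forces $j=i_2$; permissibility is then automatic since $\ell=c$ and, by non-forbiddenness of $B$, the vertices $c=i_1$ and $j=i_2$ lie in opposite parts. If $c\notin\{i_1,i_2\}$, both $i_1$ and $i_2$ are candidates for $j$; non-forbiddenness places $i_1$ and $i_2$ in different parts of $B$, so $c$ shares its part with exactly one of them, say $i_k$, and exactly the choice $j=i_{3-k}$ (which makes $\ell=i_k$) produces a permissible monomial, while the other choice fails.

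In both cases exactly one $j$ in the block works, which yields exactly one copy of each non-forbidden monomial on the summed right-hand side, matching the theorem. The step I expect to be the main obstacle is simply keeping the bookkeeping in the case split correct, namely the interplay between the pair-partner $\ell$ of $j$ in $A$, the location of $c$ in $B$, and the constraint $j\neq c$; there is no deeper difficulty once Lemma \ref{exactmonomial} is in hand.
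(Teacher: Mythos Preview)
Your proposal is correct and follows essentially the same approach as the paper: both arguments split on whether $c$ lies in the $A$-pair $\{i_1,i_2\}$ or not, and in each case use the non-forbiddenness of $B$ to identify the unique $j$ in the block for which the monomial is permissible. The only cosmetic difference is that the paper packages the case $c\in\{i_1,i_2\}$ as a separate preliminary lemma (Lemma~\ref{AC}), whereas you handle it directly from Lemma~\ref{exactmonomial}.
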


\begin{proof}
Since every quadratic spanning forest identity has $(1,1,\dots,1)(1,2,\dots,m)$ on the left hand side, if we sum all $m-1$ identities in a block, there will be $m-1$ copies of that on the left hand side. Therefore our left hand side is as expected, and we need only focus on the right hand side of the identity.  Fix an arbitrary block and let $c$ be the removed column.

By Lemma \ref{AC}, if a monomial contains $A_{c,\ell}$, then it appears exactly in $L_{m,c}(\ell')$. Thus any monomial containing $A_{c,\ell}$ appears exactly once within the block.

Now suppose a monomial contains $A_{\ell,j}$ where $\ell,j\neq c$. By Lemma \ref{exactmonomial}, this monomial could only be contained in either $L_{m,c}(j')$ or $L_{m,c}(\ell')$. The identity that the monomial appears in will be determined by the accompanying $B$. The $B$ paired with $A_{\ell,j}$ must have $\ell$ and $j$ in separate partitions to be non-forbidden. Thus $c$ will either be in a partition with $\ell$ or in a partition with $j$ (since all $B$s partition the vertices into two groups). In $L_{m,c}(j')$, we get $\det(L_{\{i,m\},\{j,c\}})$ for the $B$ part of the monomial. This puts $j$ in a different partition than $c$. Thus if $B$ puts $c$ in the same partition as $j$, it cannot show up in $L_{m,c}(j')$. It can, (and by Lemma \ref{exactmonomial} must), however, show up in $L_{m,c}(\ell')$ since that identity requires $\ell$ to be in the pair in $A$ (and the rest singletons), so $B$ (which pairs $c,j\neq \ell$) will make the pair permissible. Thus if our monomial contains $A_{\ell,j}$ and a $B$ that pairs $c$ with $j$, the monomial will show up exactly once in $L_{m,c}(\ell')$. Similarly, if our monomial contains $A_{\ell,j}$ and a $B$ that pairs $c$ with $\ell$, the monomial will show up exactly once in $L_{m,c}(j')$.

We have shown that every possible $AB$ monomial shows up in exactly one of the identities within a given block. Thus every block of identities must sum to give exactly one copy of every non-forbidden monomial on the right hand side.
\end{proof}

We have shown that the $m(m-1)$ identities are interrelated in blocks of size $m-1$. Now let's bring our attention to Conjecture~\ref{conj rephrased} itself.  Each of the $m(m-1)$ identities is an expression of the form $(1,1,\ldots,1)(1,2,\ldots, m) = \sum_{i,j}\alpha_{i,j}A_iB_j$, so the furthermore of the conjecture holds.  As in $m=3$ and $m=4$ we have candidates for a basis for $X_m$, namely fix one of the the $L_{m, c}(j)$, say $L_{m,m}(m-1)$, and subtract it from the others to obtain $m(m-1)-1$ elements of $X_m$.  Fix one of the blocks, say the $c=m$ block.  Subtracting the sum of $L$s in any other block from the sum of $L$s in the fixed block, we get $m-1$ identities of $L$s.  In each of the identities as many $L$s with positive signs as negative signs occur, so these identities can be rewritten in terms of differences $L_{m,i}(j) - L_{m,m}(m-1)$.   Now, we can solve for one of these differences in each block $1\leq i \leq m-1$, say, solve for $L_{m, i}(m-1)-L_{m,m}(m-1)$. Then remove these $m-1$ differences that have been solved for from our set of $m(m-1)-1$ elements of $X_m$.  There are $m(m-2)$ elements remaining in the set and these we claim form a basis for $X_m$.

The first thing to show is that the elements of the purported basis are linearly independent as elements of $X_m$.   

\begin{proposition}\label{prop lin ind}
The elements $\{L_{m, i}(j)-L_{m,m}(m-1)\}_{\substack{1\leq i \leq m\\ 1\leq j \leq m-2}}$ are linearly independent in $X_m$
\end{proposition}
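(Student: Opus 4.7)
The plan is to prove linear independence by supposing a relation
\[
\sum_{i,j}\alpha_{i,j}\bigl(L_{m,i}(j) - L_{m,m}(m-1)\bigr) = 0 \quad \text{in } V_m
\]
and extracting each $\alpha_{i,j}$ from the vanishing of the coefficient of a carefully chosen $AB$-monomial. The key tool is the explicit description of $L_{m,i}(j)$ furnished by Lemmas~\ref{row} and~\ref{exactmonomial}: writing $\tilde{j}$ for the vertex corresponding to column index $j$ in the matrix with column $i$ removed (so $\tilde{j}=j$ if $j<i$ and $\tilde{j}=j+1$ if $j\ge i$), the coefficient of $A_{p,q}B$ in $L_{m,i}(j)$ equals $1$ exactly when $\tilde{j}\in\{p,q\}$ and, letting $\ell$ be the other element of $\{p,q\}$, the partition $B$ places $\tilde{j}$ in a different part from both $i$ and $\ell$; otherwise it is $0$. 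Reparametrising, the $m(m-2)$ index pairs split into those with $\tilde{j}\in\{1,\ldots,m-2\}$ and $i\in[m]\setminus\{\tilde{j}\}$, and those with $\tilde{j}=m-1$ and $i\in\{1,\ldots,m-2\}$, and I would handle these two groups in turn.

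For the first stage I would test against monomials of the form $A_{\tilde{j},m}B$. Since $m-1\notin\{\tilde{j},m\}$, no such monomial appears in $L_{m,m}(m-1)$; and in any other $L_{m,i'}(j')$ the requirement $\tilde{j}'\in\{\tilde{j},m\}$ together with $\tilde{j}'\le m-1$ forces $\tilde{j}'=\tilde{j}$. Hence the coefficient equation for $A_{\tilde{j},m}B$ collapses to $\sum_{i'\in P_m}\alpha_{i',j'(i')}=0$, where $P_m$ is the $B$-part containing $m$ and $j'(i')$ is the unique column index that yields vertex $\tilde{j}$ after removing column $i'$. Choosing $B$ with $P_m=\{m\}$ isolates the $i'=m$ contribution and forces $\alpha_{m,j'(m)}=0$; successively choosing $P_m=\{m,i_0\}$ for each $i_0\in[m]\setminus\{\tilde{j},m\}$ then peels off $\alpha_{i_0,j'(i_0)}=0$. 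This eliminates every coefficient with $\tilde{j}\le m-2$.

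For the second stage only the coefficients $\alpha_{i,m-2}$ with $i\in\{1,\ldots,m-2\}$ remain, all corresponding to $\tilde{j}=m-1$. For each such $\ell$ I would test against $A_{m-1,\ell}B$ where $B$ is the two-part partition with $P_2=\{\ell\}$ and $P_1=[m]\setminus\{\ell\}$, so that both $m-1$ and $m$ lie in $P_1$. Appearance of $A_{m-1,\ell}B$ in $L_{m,i}(m-2)$ (which has vertex $m-1$ for any $i\le m-2$) requires $i$ to share a part with $\ell$, forcing $i=\ell$, while appearance in $L_{m,m}(m-1)$ requires $m$ to share a part with $\ell$, which plainly fails. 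The coefficient equation thus reduces to $\alpha_{\ell,m-2}=0$, completing the proof. The only bookkeeping subtlety is keeping straight the translation between the column index $j$ and the vertex $\tilde{j}$ and the exclusion of the single pair $(i,j)=(m,m-1)$ from the index set; however, the first stage only involves parameters with $\tilde{j}\le m-2$ and the second only $\tilde{j}=m-1$ with $i\le m-2$, so the excluded pair never intrudes.
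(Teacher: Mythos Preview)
Your argument is correct and is genuinely different from the paper's. You prove linear independence in $V_m$ (hence in $X_m$) by a direct coefficient-extraction scheme: for each $(i,j)$ you produce a monomial $A_{p,q}B$ whose coefficient in the assumed relation is $\alpha_{i,j}$ plus terms already known to vanish, so the $\alpha_{i,j}$ fall one after another in a triangular pattern. The two stages---first using $A_{\tilde j,m}B$ with the $m$-part of $B$ equal to $\{m\}$ and then $\{m,i_0\}$ to kill all coefficients with $\tilde j\le m-2$, and second using $A_{m-1,\ell}B$ with $B=\{\ell\}\cup([m]\setminus\{\ell\})$ to kill the remaining $\alpha_{\ell,m-2}$---are clean and the bookkeeping checks out (in particular, in the second stage any stray contribution from an $L_{m,i'}(j')$ with $\tilde j'=\ell\le m-2$ is already zero by the first stage, which you implicitly use when you say only the $\alpha_{i,m-2}$ remain).

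By contrast, the paper works modulo the element $J$ given by the common block sum $\sum_j L_{m,c}(j)$, shifts an assumed relation by block sums so that in some block one $L$ has positive coefficient and another has coefficient zero, and then argues by an exhaustive source analysis that the monomials supported on $A_{j_1,\ell_1}$ which would have come from the zero-coefficient $L$ cannot be replaced without simultaneously overproducing monomials already supplied by the positive-coefficient $L$. Your approach is shorter and more elementary, exhibiting an explicit triangular system; the paper's approach is less dependent on the particular normalisation (subtracting $L_{m,m}(m-1)$) and makes the role of the block structure more visible, which ties in with how the purported basis was constructed. Either route suffices; yours is arguably the more transparent proof of this proposition in isolation.
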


\begin{proof}
Recall from the definitions in the paragraph before Conjecture~\ref{conj rephrased}, that $V_m$ is the vector space generated by monomials $A_iB_j$, and $X_m$ is a subspace of $V_m$, specifically the subset consisting of only those sums of $AB$s which are $0$ on all graphs.  Therefore, to show a set of elements of $X_m$ is linearly independent in $X_m$ it suffices to show that the set of elements is linearly independent in $V_m$.

Furthermore, let $J$ be the sum of all permissible $AB$ pairs.  Note that by Theorem~\ref{monomial} every block sums to $J$, that is;  $J = \sum_{j}L_{m,c}(j)$ for each column $c$.  It will be convenient to work in $V_m$ modulo the ideal generated by $J$.  We will first prove that there is no nontrivial identity $I$ of the $L_{m,i}(j)-L_{m,m}(m-1)$ in $V_m/\langle J \rangle$. 

Suppose for a contradiction that we have a nontrivial identity $I$ of the $L_{m,i}(j)-L_{m,m}(m-1)$ in $V_m/\langle J \rangle$.  Since the blocks sum to $J$, we can add any multiple of $\sum_j L_{m,c}(j)$ for any $c$ to $I$ without changing the identity in $V_m/\langle J \rangle$. Using the fact that $\sum_jL_{m,c}(j)=J$ which is the sum of all permissible $AB$ pairs each with coefficient $1$, we can add copies of $\sum_jL_{m,c}(j)$ to $I$ so as to shift the coefficients so that the coefficients of the $L_{m,c}(j)$ in $I$ are all nonnegative. In particular, because $I$ is nontrivial modulo blocks, we can ensure that every block has at least one $L$, say $L_{m,c_1}(j_1)$, that has a positive coefficient, and at least one $L$, say $L_{m,c_1}(\ell_1)$, that has a coefficient of zero.

Consider the monomials in $I$ that have $A_{j_1,\ell_1}$ as their $A$ partition. By Lemma \ref{exactmonomial}, within the $c_1$ block, these monomials would come from $L_{m,c_1}(j_1)$ and $L_{m,c_1}(\ell_1)$. However, because the coefficient on $L_{m,c_1}(\ell_1)$ is $0$, the monomials containing $A_{j_1,\ell_1}$ that would ordinarily have come from that identity must instead have come from another source. Specifically, we are looking at monomials with the $A$ partition being $A_{j_1,\ell_1}$ and the $B$ partition having $\ell_1$ in one part and $j_1$ and $c_1$ in the other part. We will show that no other source can recover all of the monomials containing $A_{j_1,\ell_1}$ that would have come from $L_{m,c_1}(\ell_1)$, and as a result $I$ cannot equal $0$ modulo the blocks, resulting in a contradiction.

There are three possible sources for the monomials containing $A_{j_1,\ell_1}$ in $L_{m,c_1}(\ell_1)$. One source is they may be recovered from the same $\ell_1$ expansion identity in a different block, that is to say from $L_{m,c_2}(\ell_1)$ for some $c_2\neq c_1$. Monomials from here would have the appropriate $A$ partition, and would have a $B$ partition that has $\ell_1$ in one part and $j_1c_2$ in the other part. The ones of these which we would want to replace missing $L_{m,c_1}(\ell_1)$ terms are those where $\ell_1$ is one one side and $j_1c_2c_1$ is on the other side (that is we get half the terms we would want). However, we also get terms where $j_1\ell_1$ are the pair in $A$ and in $B$ $\ell_1c_1$ is in one part and $j_1c_2$ is in the other part.  These terms all appear in $L_{m,c_1}(j_1)$ and we have the same number of these as of the terms we wanted, so  for every missing term of $L_{m,c_1}(\ell_1)$ we could pick up in this way we also pick up an additional term that we already had from $L_{m,c_1}(j_1)$. Thus this source cannot recover our missing monomials.

The second source of our monomials containing $A_{\ell_1,j_1}$ is from $L_{m,c_2}(j_1)$. Then the corresponding $B$ partition must have $j_1$ in one part and $\ell_1c_2$ in the other part.
Thus the ones that would replace the $L_{m,c_1}(\ell_1)$ terms are the ones with $j_1c_1$ in one part and $\ell_1c_2$ in the other part. However, similar to the previous paragraph, we also get terms where in the $B$ partition $\ell_1c_2c_1$ appear in one part and $j_1$ in the other part. Again, this gives us extra terms from $L_{m,c_1}(j_1)$, so we cannot recover our missing monomials from here.

Thirdly, the monomials containing $A_{j_1,\ell_1}$ can also come from $L_{m,j_1}(\ell_1)$ or $L_{m, \ell_1}(j_1)$. In both cases all $B$s with $j_1$ and $\ell_1$ separated will be permissible with $A_{j_1,\ell_1}$. Then we will get an equal number of those with $j_1c_1$ in one part and $\ell_1$ in the other as we get with $j_1$ in one part and $\ell_1c_1$ in the other. Thus, again, for each missing monomial we recover, we add another monomial to the total in $L_{m,c_1}(j_1)$. Thus we cannot completely recover the monomials missing from the $0$-coefficient $L_{m,c_1}(\ell_1)$. As a result, $I$ cannot equal zero, so it cannot be a nontrivial identity in $V_m/\langle J\rangle$.

Finally, note that in the argument above we only used $J$ in the form of block sums $\sum_jL_{m,c}(j)$, so in fact we have proved that if there is a nontrivial identity $I$ of elements of the purported basis then it not only must be a multiple of $J$ in $V_m$, but further it must be a linear combination of block sums $\sum_jL_{m,c}(j)$ in $V_m$.  However, the block sums are exactly the identities that were removed in the construction of the purported basis, so this is impossible by construction.  Therefore the purported basis is linearly independent in $V_m$ and hence in $X_m$.
\end{proof} 

\subsection{Obtaining all identities}

The final part is to show that the purported basis given above spans $X_m$.  In the $m=3$ and $m=4$ cases this was simply a direct computation, but for the general case, we must approach it differently, and will do so by induction.

It will be convenient for the induction to work modulo $(1,1,\ldots)(1,2,\ldots)$.  Then the statement we need to prove becomes that every sum of $AB$ pairs which is 0 modulo $(1,1,\ldots)(1,2,\ldots)$ can be written as a sum of $L_{r,c}(j)$.  This is the content of Theorem~\ref{finaltheorem}. 

Let us first establish some notation. Recall that $A_{\ell,k}$ is a partition on marked vertices such that $\ell$ and $k$ are in the same part and every other vertex is in a part by itself. Similarly, we will write $A_{\neq i}$ to mean a partition $A$ where $i$ is not in the part with two vertices (i.e. $i$ is in a part by itself) and $A_{\ell, \neq i}$ to mean a partition $A$ where $i$ is not in the part with two vertices but $\ell$ is (the other member of the part with $\ell$ being unspecified).  These latter two notations will appear in the context of sums where we will be summing over partitions with these constraints. Let $B^{P_1,P_2}$ be a partition on marked vertices with two parts: $P_1$ and $P_2$. Let $B^{P_1i,P_2}$ be our partition $B$ where vertex $i$ is specifically in part $P_1$ and let $B^{P_1,P_2i}$ be the same $B$ partition but with $i$ in the other part.

\begin{theorem} \label{finaltheorem}
For $n$ marked vertices, any identity of the form $$\eta (1,1,\dots,1)(1,2,\dots,m)=\sum \alpha_{\ell,k,P_1,P_2}A_{\ell,k}B^{P_1,P_2},$$ where $\eta$ and $\alpha_{\ell,k,P_1,P_2}$ are arbitrary coefficients, can be written as $$\eta (1,1,\dots,1)(1,2,\dots,m)=\sum \varepsilon_{r,c,j}L_{r,c}(j),$$ .
\end{theorem}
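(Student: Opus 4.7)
The plan is to proceed by induction on the number of marked vertices $m$, with the cases $m=3$ and $m=4$ treated in Section 3 and Subsection \ref{subsec 4 vert} as base cases. For the inductive step, given an identity of the stated form, I would first absorb the left-hand side by subtracting $\eta \cdot \mathcal{L}_{m,m}(m-1)$ from both sides, reducing to showing that every homogeneous identity $0 = \sum \alpha_{\ell,k,P_1,P_2}A_{\ell,k}B^{P_1,P_2}$ can be written as a linear combination of differences $L_{r,c}(j) - L_{r',c'}(j')$, equivalently that every element of $X_m$ lies in the span of the column expansion identities.

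Next, I would single out a marked vertex, say $m$, and divide the terms of the identity into two kinds: (i) those of the form $A_{m,k}B^{P_1,P_2}$, where $m$ lies in the $A$-pair, and (ii) those of the form $A_{\ell,k}B^{P_1,P_2}$ with $\ell,k\neq m$, where $m$ is a singleton in $A$. By Lemma \ref{exactmonomial}, the column expansion identity $L_{c,c}(m)$ for $c\neq m$ contains exactly one copy of each permissible monomial whose $A$-pair includes $m$, with the coefficient of $A_{m,k}B^{P_1,P_2}$ in $L_{c,c}(m)$ being $1$ precisely when $c$ lies in the $B$-part not containing $m$. An appropriate linear combination of the $L_{c,c}(m)$'s, modulo the block relations of Theorem \ref{monomial}, should then cancel all type-(i) terms from the identity.

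The remaining identity involves only type-(ii) monomials, where $m$ is a singleton in $A$ and sits in one of the two parts of $B$. Pairing $B^{P_1 m,P_2}$ with $B^{P_1, P_2 m}$ and using the summation rule from Section \ref{subsec set up} yields the spanning forest polynomial of $B^{P_1,P_2}$ with $m$ unmarked. Decomposing each coefficient pair into a symmetric part (equal on the two placements of $m$) and an asymmetric part, the symmetric part descends to an identity of the same form on the $m-1$ marked vertices $\{1,\dots,m-1\}$, to which the inductive hypothesis applies, and the resulting $(m-1)$-vertex column expansion identities lift back to $m$-vertex ones via the unmarking relation. The asymmetric remainder should then be eliminated using further column expansion identities such as $L_{m,c}(j)$, which explicitly control the placement of $m$ in the $B$-partition.

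The main obstacle is the cancellation step: one must show that the $m-1$ available column expansion identities $L_{c,c}(m)$, modulo the single block relation among them, provide exactly the right freedom to cancel the type-(i) coefficient vector, and moreover that the residual asymmetric type-(ii) contribution can also be eliminated by column expansion. A careful bookkeeping argument—tracking how the coefficient of each $A_{m,k}B^{P_1,P_2}$ depends on the position of $c$ in $B$, and how unmarking vertex $m$ intertwines the $m$- and $(m-1)$-vertex column expansion identities—will be the essential technical input. Once the cancellation and descent are established, the lifting step is routine because the column expansion identities behave naturally under the unmarking operation.
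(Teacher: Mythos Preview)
Your inductive framework and the idea of descending to $m-1$ marked vertices are correct and match the paper's approach. However, there is a genuine gap in your cancellation step, and your handling of the asymmetric remainder is also where the real work lies.

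\textbf{The cancellation gap.} You propose to kill all type-(i) terms $A_{m,k}B^{P_1,P_2}$ using a linear combination of the $L_{c,c}(m)$ for $c\neq m$. But there are only $m-1$ such identities, while the number of independent type-(i) coefficients is of order $(m-1)\cdot 2^{m-2}$. The system is massively over-determined, and invoking the block relations of Theorem~\ref{monomial} does not help: those only give $m-1$ further linear relations. You are implicitly relying on the fact that the $\alpha_{m,k,P_1,P_2}$ already satisfy many constraints because the original expression is an identity on all graphs, but that is precisely what you are trying to exploit via induction; you cannot use it \emph{before} the descent.

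\textbf{How the paper avoids this.} The paper never attempts to cancel type-(i) terms directly. Instead it scales all edges at a vertex $i$ by an indeterminate $t$ and extracts the part linear in $t$. Interpreted on $H_i=G\setminus i$, the type-(i) terms $A_{i,k}B$ collapse to copies of $(1,1,\dots,1)(1,2,\dots,m-1)$, i.e., they contribute to the \emph{left-hand side} $\eta$ of the descended identity, not to its $AB$ sum. The type-(ii) terms become genuine $AB$ pairs on $m-1$ vertices. Now the inductive hypothesis applies to the whole thing at once (Lemma~\ref{firstdifference}); lifting the resulting $L$'s back to $G$ and subtracting half leaves only the original type-(i) terms plus asymmetric differences $A_{\neq i}(B^{P_3i,P_4}-B^{P_3,P_4i})$. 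Iterating this for two further vertices $x,y$ reduces everything to such differences.

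\textbf{The asymmetric remainder.} You say these ``should then be eliminated using further column expansion identities such as $L_{m,c}(j)$.'' The paper shows something sharper: these coefficients are in fact zero. The key is Lemma~\ref{ispecial}, which pins down that any $L$ contributing to a difference $A_{\neq i}(B^{P_3i,P_4}-B^{P_3,P_4i})$ must have column $i$ removed; applying this simultaneously for $i$ and $x$ forces a contradiction (Lemma~\ref{zerocoeff}). Your proposal does not contain this argument, and without it the induction does not close.
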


We will prove this theorem at the end of this section, but first we need some lemmas to make the proof go more smoothly.

We are going to prove Theorem \ref{finaltheorem} using induction on the number of marked vertices.  The base case is given in Section~\ref{subsec 4 vert}, so for the remaining parts of this section, we will assume our inductive hypothesis, namely that Theorem \ref{finaltheorem} holds for $m-1$ marked vertices.

\begin{lemma}\label{firstdifference}
Assume that Theorem \ref{finaltheorem} holds for $m-1$ marked vertices. For $m$ marked vertices (one of which is $i$), any identity of the form $$\eta (1,1,\dots,1)(1,2,\dots,m)=\sum \alpha_{\ell,k,P_1,P_2}A_{\ell,k}B^{P_1,P_2},$$ where $\eta$ and $\alpha_{\ell,k,P_1,P_2}$ are arbitrary coefficients, can be written as $$\eta (1,1,\dots,1)(1,2,\dots,m)=\frac{1}{2}\sum_{c,j} \varepsilon_{i,c,j}L_{i,c}(j)+\sum_{k,P_1,P_2} \alpha_{i,k,P_1,P_2}A_{i,k}B^{P_1,P_2}+\sum \beta_{A_{\neq i}, P_3,P_4}(A_{\neq i}(B^{P_3i,P_4}-B^{P_3,P_4i})),$$
where the last sum runs over all $A_{\neq i}$ as well as over $P_3$ and $P_4$ and the coefficient $\beta_{A_{\neq i}, P_3, P_4}$ may depend on the specific $A$ partition as well as on $P_3$ and $P_4$.
\end{lemma}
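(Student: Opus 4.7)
The plan is to proceed in two phases. In the first phase, we split the right-hand side of the given identity into the $A_{i,k}$ part and the $A_{\neq i}$ part, and further decompose the latter into symmetric and antisymmetric halves with respect to the position of $i$ in the two-part partition $B$. The antisymmetric halves are exactly the $\beta$-terms of the desired form, with
$$\beta_{A_{\neq i}, P_3, P_4} = \tfrac{1}{2}\bigl(\alpha_{A_{\neq i}, P_3 i, P_4} - \alpha_{A_{\neq i}, P_3, P_4 i}\bigr).$$
The $A_{i,k}$ part is kept as is and contributes $\sum_{k,P_1,P_2}\alpha_{i,k,P_1,P_2}A_{i,k}B^{P_1,P_2}$ directly to the rewritten identity.

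In the second phase, we need to show that the remaining symmetric-in-$i$ part, combined with the residual contribution from $\eta(1,1,\ldots,1)(1,2,\ldots,m)$, can be expressed as $\tfrac{1}{2}\sum_{c,j}\varepsilon_{i,c,j}L_{i,c}(j)$. The key structural observation is that, by Lemma~\ref{exactmonomial}, any $L_{i,c}(j)$ with $c,j\neq i$ has a naturally symmetric-in-$i$ contribution to its $A_{\neq i}$ monomials (since for permissible monomials with pairing partner $\ell\neq i$, the vertex $i$ is one of the free vertices and can sit on either side of $B$) and a rigidly placed contribution to its $A_{i,k}$ monomials (arising exactly when the pairing partner $\ell$ equals $i$, in which case $i$ is pinned to the part of $B$ containing $c$). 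So a suitable linear combination of $L_{i,c}(j)$ identities can reproduce any symmetric-in-$i$ combination of $A_{\neq i}$ monomials, provided the unwanted $A_{i,k}$ contributions can be arranged to cancel.

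To construct the coefficients $\varepsilon_{i,c,j}$ concretely, we invoke the inductive hypothesis. By the unmarking identity from Section~\ref{subsec set up}, the symmetric combination $A_{\neq i}\bigl(B^{P_3 i, P_4} + B^{P_3, P_4 i}\bigr)$ equals a polynomial in which $i$ appears as a singleton in $A$ and is unmarked in $B$; the collection of these symmetric contributions, together with a parallel unmarking in the factor $(1,2,\ldots,m)$ of the LHS, assembles into a spanning forest identity on $m-1$ marked vertices. Theorem~\ref{finaltheorem} at $m-1$ marked vertices then writes this as a sum of $(m-1)$-vertex column expansion identities, each of which lifts to an $m$-vertex identity by reinstating $i$ as a free vertex in $B$. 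This lift is identified with a specific linear combination of $L_{i,c}(j)$ identities on $m$ vertices, and the symmetric doubling intrinsic to the lift is what produces the factor $\tfrac{1}{2}$ in the statement.

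The main obstacle will be the careful bookkeeping in the lift from $(m-1)$ to $m$ marked vertices. In particular, the lifted $L_{i,c}(j)$ identities do themselves contribute $A_{i,k}$ terms, and we need to verify that in the chosen linear combination these $A_{i,k}$ contributions cancel, so that the $A_{i,k}$ coefficients in the rewritten identity remain exactly $\alpha_{i,k,P_1,P_2}$ as in phase one. A secondary concern is handling the boundary cases of the $(c,j)$-sum where $c=j$ (in which $L_{i,c}(j)=0$) or $c=i$ or $j=i$ (where the symmetry argument breaks down and the identity contributes rigidly), which must be separately tracked. Once these technicalities are dispatched, the identity in the desired form follows.
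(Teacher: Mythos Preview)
There is a genuine gap in Phase 2. Your reduction to $m-1$ marked vertices via the unmarking identity does not actually produce an identity to which the inductive hypothesis applies. The sum $B^{P_3 i,P_4}+B^{P_3,P_4 i}$ is indeed the $B$-polynomial with $i$ unmarked, so that side is fine. But the factor $A_{\neq i}$ is an $(m-1)$-part partition of all $m$ marked vertices in which $i$ occupies its own singleton part; you cannot ``unmark'' $i$ here without changing the polynomial, and what you would need for Theorem~\ref{finaltheorem} at $m-1$ is an $(m-2)$-part partition of $m-1$ marked vertices. The same problem afflicts your ``parallel unmarking in the factor $(1,2,\ldots,m)$'': there is no way to pass from $(1,2,\ldots,m)$ to $(1,2,\ldots,m-1)$ by the identity of Section~\ref{subsec set up}, since $i$ is a singleton there too. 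So the symmetric-in-$i$ piece you isolate is not an $(m-1)$-marked-vertex $AB$ identity, and the inductive hypothesis does not apply to it.

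The paper's proof supplies the missing mechanism: rather than unmarking, it scales every edge incident to $i$ by a new indeterminate $t$ and then extracts the part linear in $t$. This picks out exactly those spanning-forest pairs in which $i$ is isolated in one factor and a leaf in the other, and such pairs can be reinterpreted as spanning forests of the graph $H_i=G\setminus i$ on $m-1$ marked vertices, with the correct part counts (the singleton containing $i$ simply disappears from $A_{\neq i}$, leaving $m-2$ parts). That is what makes the inductive hypothesis applicable; your Phase~1 decomposition is then recovered automatically from the lift back to $G$, since lifting doubles the $A_{\neq i}B$ terms (whence the factor $\tfrac12$) and the calculation $(\alpha-\beta)/2$ emerges exactly as you wrote your $\beta$-coefficients. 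A secondary point: your ``main obstacle'' that the $A_{i,k}$ contributions from the lifted $L$'s must cancel is a misreading of the lemma. The symbols $\alpha_{i,k,P_1,P_2}$ on the right-hand side are not asserted to equal the original coefficients; they are simply whatever $A_{i,k}B$ coefficients remain after subtracting $\tfrac12\sum\varepsilon L$, and the paper makes no attempt to show they agree with the originals.
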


\begin{proof}
Let us begin with a graph $G$ on $m$ marked vertices, and let \begin{equation}\label{eq eta sum}\sum \alpha_{\ell,k,P_1,P_2}A_{m,k}B^{P_1,P_2}\end{equation} be a sum on $G$ that sums to $\eta(1,1,\dots,1)(1,2,\dots,m)$. Let $i$ be one of the $m$ marked vertices. 

We want to understand the behaviour of the polynomials when vertex $i$ is removed.  In order to do this, scale every edge coming out of $i$ with a new indeterminant $t$. We want to look at the part of the sum that is linear with respect to $t$. If $A$ does not have $i$ in its pair ($A_{\neq i}$), then to be linear in $t$, we have the part of this $AB$ where $i$ is a leaf in the spanning forests for $B$ and $i$ is an isolated vertex in the spanning forests for $A$. If $A$ does have $i$ as a paired vertex, then $t$ will show up in $A$, so in order to be linear it cannot show up in $B$. Thus in the part of this $AB$ which is linear in $t$, $i$ will be an isolated vertex in $B$, and the other part of $B$ will contain the rest of the marked vertices, while $i$ will be a leaf in $A$.  

These partitions and their corresponding spanning forests and polynomials can all be naturally interpreted on the graph with $i$ removed.  Let $H_i$, a graph on $m-1$ marked vertices, be $G$ with $i$ removed. In the part of the sum \eqref{eq eta sum} where $t$ is linear, any $AB$ pair where $A=A_{i,-}$ (i.e. $i$ is a paired vertex in $A$) will become a copy of $(1,1,\dots,1)(1,2,\dots m-1)$ in $H_i$: with the $i$ removed, every special vertex from $A_{i,-}$ will now be in a separate part, giving us $(1,2,\dots,m-1)$; and in order for $t$ to be linear, $B$ must have $P_1=i$ and $P_2$ the rest of the vertices, giving us $(1,1,\dots,1)$ in $H_i$. Also, making the same $t$ scaling on $\eta(1,1,\ldots, 1)(1,2,\ldots,m)$ and taking the linear part in $t$, we see that $i$ must be isolated in $(1,1,\ldots, 1)$ and a leaf in $(1,2,\ldots, m)$, so reinterpreting on $H_i$ we get $(1,1,\ldots, 1)(1,2\ldots, m-1)$.

This means that the linear part in $t$ of $\eqref{eq eta sum}$ can be interpreted as a sum of $AB$ on $m-1$ marked vertices that equals to copies of $(1,1,\dots,1)(1,2,\dots m-1)$. 
We can apply Theorem \ref{finaltheorem} to get this sum on $H_i$ written as a sum of $L$ identities: 
$\sum \varepsilon_{i,c,j} L^{H_i}_{i,c}(j),$ 
where $i$ is used as the row removed simply as a notational device in order to keep track of the subgraph that this identity originally came from, and the superscript $H_i$ is used to indicate that we are looking at the identity $L_{i,c}(j)$ restricted to the subgraph $H_i$.

Notice that we can lift this sum $\sum \varepsilon_{i,c,j}L^{H_i}_{i,c}(j)$ in $H_i$ to the same sum in $G$ by summing over the same $i,c,j$ to get $\sum \varepsilon_{i,c,j} L^G_{i,c}(j)$. This lifting gives us some extra $A_{i,-}B$ terms that were not in the original $L^{H_i}$'s, and it gives us twice as many $A_{\neq i}B$ parts since $i$ can belong to either $P_1$ or $P_2$ in $B$.

Let us take our original sum of $AB$ pairs and subtract off half this lifted $L$ sum (we take half since there are twice as many $A_{\neq i}B$ parts as in $H_i$): $$\sum \alpha_{\ell,k,P_1,P_2}A_{\ell,k}B^{P_1,P_2}-\frac{1}{2}\sum \varepsilon_{i,c,j} L^G_{i,c}(j).$$

To see what this evaluates to, let us look at a single set of pairs in our original sum: Take some $A_{\neq i}$.  In our original sum we have $\alpha A_{\neq i}B^{P_3i,P_4}+\beta A_{\neq i}B^{P_3,P_4i}$ for this particular $A_{\neq i}$ and for some coefficients $\alpha$ and $\beta$. When we collapse down to $H_i$, both $A_{\neq i}B^{P_3i, P_4}$ and $A_{\neq i}B^{P_3, P_4i}$ become the same $AB$ pair in $H_i$. Let us say the coefficient on that $AB$ pair in $H_i$ becomes $\varepsilon$ (which might be a sum of different $\varepsilon_{i,c,j}$'s on $L_{i,c}(j)$). Since this has to be the same number as the number of times this pair appears in $H_i$, we know that $\alpha+\beta=\varepsilon$. Then when we lift this sum of $L$'s back to $G$ and subtract half, we get 
\begin{align*}
\alpha A_{\neq i}B^{P_3i,P_4}&+\beta A_{\neq i}B^{P_3,P_4i}-\frac{\varepsilon}{2} A_{\neq i}B^{P_3i,P_4}-\frac{\varepsilon}{2} A_{\neq i}B^{P_3,P_4i}\\
&=\left(\alpha-\frac{\alpha+\beta}{2} \right)A_{\neq i}B^{P_3i,P_4}+\left(\beta-\frac{\alpha+\beta}{2}\right) A_{\neq i}B^{P_3,P_4i}\\
&=\left(\frac{\alpha}{2}-\frac{\beta}{2} \right)A_{\neq i}B^{P_3i,P_4}+\left(\frac{\beta}{2}-\frac{\alpha}{2}\right) A_{\neq i}B^{P_3,P_4i}\\
&=\left(\frac{\alpha}{2}-\frac{\beta}{2} \right)(A_{\neq i}B^{P_3i,P_4}-A_{\neq i}B^{P_3,P_4i})
\end{align*}
for this particular $A_{\neq i}$.

This means that, using this obesrvation now on all the $A_{\neq i}$ when we take our original sum and subtract our lifted sum, we get $$\sum \alpha_{\ell,k,P_1,P_2}A_{\ell,k}B^{P_1,P_2}-\frac{1}{2}\sum \varepsilon_{i,c,j} L^G_{i,c}(j)=\sum \alpha_{i,k,P_1,P_2}A_{i,k}B^{P_1,P_2}+\sum \beta_{A_{\neq i},P_3,P_4}(A_{\neq i}(B^{P_3i,P_4}-B^{P_3,P_4i})).$$

By our original assumption, $\sum \alpha_{\ell,k,P_1,P_2}A_{\ell,k}B^{P_1,P_2}=\eta(1,1,\dots,1)(1,2,\dots,m)$. Substituting this in and adding $\frac{1}{2}\sum \varepsilon_{i,c,j} L^G_{i,c}(j)$ to both sides yields the result.

\end{proof}

\begin{lemma}\label{ispecial}
Let $H_x$ be $G$ with marked vertex $x$ removed.
Consider a difference $A_{\neq i}(B^{P_3i,P_4}-B^{P_3,P_4i})$ in $H_x$ for any $x\neq i$ that appears as part of an $AB$ sum which has been expressed in terms of $L$s.  Any $L$s contributing to this difference must be of the form $L_{x,i}(\ell)$.
\end{lemma}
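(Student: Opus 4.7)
The plan is to show that any identity $L_{x,c}(j)$ with $c \ne i$ contributes the same coefficient to the two monomials $A_{\neq i}B^{P_3i,P_4}$ and $A_{\neq i}B^{P_3,P_4i}$, so its net contribution to the difference $A_{\neq i}(B^{P_3i,P_4}-B^{P_3,P_4i})$ vanishes. By the notational convention of Lemma~\ref{firstdifference}, every $L$ identity on $H_x$ carries row index $x$, so the only $L$s which can actually contribute to this asymmetric combination must have $c = i$, yielding the form $L_{x,i}(\ell)$ claimed in the statement.

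To carry this out, I would fix an arbitrary $L_{x,c}(j)$ on $H_x$ and apply Lemma~\ref{exactmonomial}: its right hand side is exactly one copy of each permissible monomial, namely pairs $A_{j,\ell}B$ where the $B$ partition separates $j$ from $\{c,\ell\}$ and the placement of every other vertex in $B$ is unconstrained. For this to match $A_{\neq i}B$ we need $i \notin \{j,\ell\}$, so in particular $j\ne i$ and $\ell\ne i$.

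The main step is then a short case distinction. Assume $c \ne i$. Then none of $j$, $c$, or $\ell$ equals $i$, so the constraint defining permissibility of $B$ never involves $i$. In particular $B^{P_3i,P_4}$ is permissible if and only if $B^{P_3,P_4i}$ is, and by Lemma~\ref{exactmonomial} each of the two monomials $A_{\neq i}B^{P_3i,P_4}$ and $A_{\neq i}B^{P_3,P_4i}$ occurs in $L_{x,c}(j)$ with coefficient exactly $1$ (or neither occurs). In either case the two contributions are identical, so $L_{x,c}(j)$ contributes $0$ to the difference.

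Therefore any $L$ producing a nonzero contribution to $A_{\neq i}(B^{P_3i,P_4}-B^{P_3,P_4i})$ must have $c=i$, giving the form $L_{x,i}(\ell)$. The one point requiring care is the interpretation of ``contributing'': the $L_{x,c}(j)$ with $c\ne i$ do place coefficient $1$ on the individual monomials $A_{\neq i}B^{P_3i,P_4}$ and $A_{\neq i}B^{P_3,P_4i}$ whenever these are permissible, but because they do so symmetrically they cannot create any coefficient on the antisymmetric combination that makes up the difference. Only when $c=i$ does the permissibility constraint on $B$ force $i$ onto a specific side, allowing $L_{x,i}(\ell)$ to produce an asymmetric contribution.
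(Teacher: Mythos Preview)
Your proposal is correct and follows essentially the same approach as the paper's proof. The paper organizes the argument into three explicit cases for the $L$s on $H_x$ --- namely $L_{x,i}(\ell)$, $L_{x,c}(i)$ with $c\neq i$, and $L_{x,c}(\ell)$ with $c,\ell\neq i$ --- ruling out the latter two; you instead absorb the case $j=i$ into the preliminary observation that matching $A_{\neq i}$ forces $i\notin\{j,\ell\}$, and then split only on whether $c=i$. The content and use of Lemma~\ref{exactmonomial} are identical.
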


\begin{proof}
There are three options for the $L$s that could exist in $H_x$: $L_{x,i}(\ell)$, $L_{x,c}(i)$, or $L_{x,c}(\ell)$ where $c,\ell\neq i$. We will show that $L_{x,c}(i)$ and $L_{x,c}(\ell)$ cannot contribute to the difference. Let us start with $L_{x,c}(i)$: by our combinatorial interpretation of the determinants in the column expansion identity, this would require $i$ to be in the part of size 2 in $A$ for any monomial that appears in the identity. However, our difference specifically requires that $i$ is not in the part of size 2 in $A$, so $L_{x,c}(i)$ cannot contribute to the difference.

Now if we have $L_{x,c}(\ell)$ with $c,\ell\neq i$, then there is no restriction inherent in the identity on what part of each partition $i$ can belong to. Since in the difference $i$ is not in the part of size 2 in $A$, then by Lemma~\ref{exactmonomial} $i$ can belong to either of the parts in $B$. However, any monomial that can show up according to Lemma~\ref{exactmonomial} will show up, which means that if $L_{x,c}(\ell)$ contains $A_{\neq i}B^{P_3i,P_4}$, it will also contain $A_{\neq i}B^{P_3,P_4i}$ with the same coefficient. Thus this cannot contribute to a difference of the two monomials.

Because $L_{x,c}(\ell)$ and $L_{x,c}(i)$ cannot contribute to the difference, that leaves only $L_{x,i}(\ell)$ to contribute.
\end{proof}

We are almost ready to prove Theorem \ref{finaltheorem}, but before we do, let us re-examine 
\begin{equation}\label{eq re examine}
\sum \alpha_{\ell,k,P_1,P_2}A_{\ell,k}B^{P_1,P_2}-\frac{1}{2}\sum \varepsilon_{i,c,j} L^G_{i,c}(j)=\sum \alpha_{i,k,P_1,P_2}A_{i,k}B^{P_1,P_2}+\sum \beta_{P_3,P_4}(A_{\neq i}B^{P_3i,P_4}-A_{\neq i}B^{P_3,P_4i}).
\end{equation}
Specifically, let us compare differences in pairs from the second term on the right hand side with a special focus on where a second marked vertex $x$ belongs in $B$: $\varepsilon_{P_3}(A'B^{P_3ix,P_4}-A'B^{P_3x,P_4i})$ and $\varepsilon_{P_4}(A'B^{P_3i,P_4x}-A'B^{P_3,P_4ix})$ where $A'$ is a fixed partition $A$ such that neither $i$ nor $x$ are special in $A$.  Note that it is not necessarily true that $\varepsilon_{P_3}$ and $\varepsilon_{P_4}$ equal each other. Let us consider what happens when we now bring \eqref{eq re examine} down to the graph $H_x$ with marked vertex $x$ removed using the same technique of scaling all the edges incident to $x$ and then considering the linear part.  As before, the resulting expression in $H_x$ will be a sum of $AB$ pairs equal to $(1,1,\ldots)(1,2,\ldots, m-1)$ and hence by the inductive hypothesis will be expressible in terms of $Ls$ on $H_x$.  When we remove the vertex $x$ from $G$ to bring \eqref{eq re examine} down to $H_x$, $\varepsilon_{P_3}A'B^{P_3ix,P_4}$ and $\varepsilon_{P_4}A'B^{P_3i,P_4x}$ both collapse to the same monomial, which either appears or does not in each of the $L$s in $H_x$ which give the sum. Then, as before, raise the sum of $Ls$ up to $G$ and subtract it from \eqref{eq re examine} (with a coefficient $1/2$), writing the $Ls$ as $Ls$ on the left and as their sums of $AB$ pairs on the right.  Then on the right hand side the two differences we focused on become $$\left(\frac{\varepsilon_{P_3}}{2}-\frac{\varepsilon_{P_4}}{2}\right)(A'B^{P_3ix,P_4}-A'B^{P_3i,P_4x})$$

Similarly, $-\varepsilon_{P_3}A'B^{P_3x,P_4i}$ and $-\varepsilon_{P_4}A'B^{P_3,P_4ix}$ both collapse to the same monomial in $H_x$, so when we raise it back up to $G$ and subtract we will get $$-\left(\frac{\varepsilon_{P_3}}{2}-\frac{\varepsilon_{P_4}}{2}\right)(A'B^{P_3x,P_4i}-A'B^{P_3,P_4xi})$$ 

Let $\varepsilon=\left(\frac{\varepsilon_{P_3}}{2}-\frac{\varepsilon_{P_4}}{2}\right)$. Then taking these together we have 

\begin{equation}\label{combinedmonomials}
\varepsilon(A'B^{P_3ix,P_4}+A'B^{P_3,P_4ix}-A'B^{P_3i,P_4x}-A'B^{P_3x,P_4i})
\end{equation}

\begin{lemma}\label{zerocoeff}
With set up and notation as above, in Equation \ref{combinedmonomials}, $\varepsilon=0$.
\end{lemma}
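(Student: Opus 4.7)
The plan is to express $\varepsilon$ as a linear functional of the original coefficients $\alpha$ from the starting identity, and then show that this functional vanishes on every valid identity.

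First, I would chase the definition of $\varepsilon$ through the two reduction steps: since $\varepsilon = \tfrac{1}{2}(\varepsilon_{P_3} - \varepsilon_{P_4})$, and each $\varepsilon_{P_k}$ is itself half the difference of two original coefficients (by the formula $\beta = \tfrac{1}{2}(\alpha_1 - \alpha_2)$ that appears in the proof of Lemma~\ref{firstdifference}), a direct substitution yields
\[
\varepsilon \;=\; \tfrac{1}{4}\bigl(\alpha^{M_1} + \alpha^{M_4} - \alpha^{M_2} - \alpha^{M_3}\bigr),
\]
where $\alpha^{M}$ denotes the coefficient of the monomial $M$ in $\sum \alpha_{\ell,k,P_1,P_2} A_{\ell, k} B^{P_1, P_2}$ and $M_1, M_2, M_3, M_4$ are the four monomials in \eqref{combinedmonomials}. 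It therefore suffices to show that the linear functional $f(\alpha) := \alpha^{M_1} + \alpha^{M_4} - \alpha^{M_2} - \alpha^{M_3}$ vanishes on every valid identity.

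Next, I would verify $f(L_{r, c}(j)) = 0$ for every column expansion identity by direct case analysis using Lemma~\ref{exactmonomial}. Writing the pair in $A'$ as $\{p, q\}$ with $p \in P_3$ and $q \in P_4$ without loss of generality, the $A'$-monomials in $L_{r, c}(j)$ appear only when $j \in \{p, q\}$, and depending on whether $c \in \{i, x, p, q\}$ or not, either exactly two of the $M_k$'s are permissible (with the signs in $f$ cancelling pairwise: for example, if $c = i$ and $j = p$ only $M_2$ and $M_4$ appear, giving $0 + 1 - 1 - 0 = 0$) or all four are (cancelling via $1 + 1 - 1 - 1 = 0$) or none are. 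Thus $f(L_{r, c}(j)) = 0$ in every case.

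Since any valid $\alpha$ differs from some $L_{r, c}(j)$ by an element of $X_m$, what remains is to show $f$ vanishes on $X_m$. My plan is to do this by induction on $m$, using the inductive hypothesis that $X_{m-1}$ is spanned by differences of column expansion identities on $m-1$ vertices (so that an analogous functional on $H_x$ vanishes by the same case analysis), and then lifting the result from $H_x$ to $G$ using Lemmas~\ref{firstdifference} and~\ref{ispecial} to match coefficients monomial by monomial. The main obstacle is this last step: showing $f$ vanishes on $X_m$ amounts to establishing that column-expansion differences generate $X_m$, which is essentially the spanning direction of Theorem~\ref{finaltheorem}, and the argument must be arranged so that it only invokes the $m-1$ case of the theorem rather than the $m$ case we are currently proving. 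Once this circularity is untangled, $f|_{X_m} = 0$ is immediate from the vanishing on each $L$, and hence $\varepsilon = f(\alpha)/4 = 0$ for every valid identity.
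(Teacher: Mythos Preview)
Your tracing of $\varepsilon$ back to the original coefficients is correct, and your case analysis showing $f(L_{r,c}(j))=0$ is essentially right (it is in fact the same computation underlying Lemma~\ref{ispecial}). However, the gap you yourself identify is genuine and is not resolved by your plan. To conclude $f(\alpha)=0$ you would need to know that every valid identity on $m$ vertices is a combination of $L$'s, which is precisely Theorem~\ref{finaltheorem} for $m$; invoking the $m-1$ case through a collapse to $H_i$ or $H_x$ does not help, because under those collapses the four monomials $M_1,\dots,M_4$ pair up and the functional becomes identically zero, so you recover no information about $\varepsilon$.

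The paper's proof breaks the circularity with a different idea: choose a \emph{third} vertex $y$ not in the pair of $A'$ and pass to $H_y$. There the inductive hypothesis for $m-1$ genuinely applies, so the collapsed identity is a combination of $L$'s on $H_y$. Now observe that the expression in \eqref{combinedmonomials} can be grouped either as a sum of two $i$-differences $A'(B^{\cdots i,\cdots}-B^{\cdots,\cdots i})$ or as a sum of two $x$-differences $A'(B^{\cdots x,\cdots}-B^{\cdots,\cdots x})$. Lemma~\ref{ispecial} then says that any $L$ on $H_y$ contributing to an $i$-difference must have column $i$ removed, while any $L$ contributing to an $x$-difference must have column $x$ removed. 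Since the two groupings are the same expression, a contributing $L$ would have to remove both columns at once, which is impossible. Hence no $L$ contributes, forcing $\varepsilon=0$. This double-grouping observation combined with Lemma~\ref{ispecial} is the missing ingredient in your argument; it is what allows the $m-1$ inductive hypothesis to say something nontrivial about the four monomials simultaneously.
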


\begin{proof}
Let $y$ be a third vertex that is also not in the part of size 2 in $A'$. 
Notice that we can collect the terms in \eqref{combinedmonomials} in two different ways:
$\varepsilon(A'(B^{P_3ix, P_4} - B^{P_3x, P_4i}) - A'(B^{P_3i, P_4x}- B^{P_3, P_4ix}))$ and $\varepsilon(A'(B^{P_3ix, P_4} - B^{P_3i, P_4x}) - A'(B^{P_3x, P_4i}- B^{P_3, P_4ix}))$.
By Lemma \ref{ispecial}, any $L$ contributing to the first of these must be of the form $L_{y,i}(\ell)$ and any $L$ contributing to the second must be of the form $L_{y,x}(\ell)$. However these are the same expression and so any $L$ contributing here must have both column $i$ and column $x$ removed, which is imposible.  Thus $\varepsilon=0$.
\end{proof}

We now have all the pieces we need to prove Theorem \ref{finaltheorem}.

\begin{proof} \textbf{(Theorem \ref{finaltheorem})}. We will proceed using induction on the number of marked vertices. As our base case, we will use $m=4$; see section 3.1. For our inductive hypothesis, we assume that Theorem \ref{finaltheorem} is true for $m-1$ marked vertices.

Let us begin with a graph $G$ on $m$ marked vertices, and let $\sum \alpha_{\ell,k,P_1,P_2}A_{\ell,k}B^{P_1,P_2}$ be a sum on $G$ that sums to $\eta(1,1,\dots,1)(1,2,\dots,m)$. Let $i$ be one of the $m$ marked vertices. By Lemma \ref{firstdifference}, we can subtract off copies of $\varepsilon_{i,c,j} L^G_{i,c}(j)$ to get: $$\sum \alpha_{\ell,k,P_1,P_2}A_{\ell,k}B^{P_1,P_2}-\frac{1}{2}\sum \varepsilon_{i,c,j} L^G_{i,c}(j)=\sum \alpha_{i,k,P_1,P_2}A_{i,k}B^{P_1,P_2}+\sum \beta_{A_{\neq i}, P_3,P_4}(A_{\neq i}(B^{P_3i,P_4}-B^{P_3,P_4i})).$$

We can apply a similar process as seen in Lemma \ref{firstdifference} by removing another vertex from $G$, let's say $x$. Then again, we will have a subgraph $H_x$ on $m-1$ marked vertices, so we can use the inductive hypothesis again:

\begin{align*}
    \sum \alpha_{\ell,k,P_1,P_2}A_{\ell,k}B^{P_1,P_2}-\frac{1}{2}\sum \varepsilon_{i,c,j} L^G_{i,c}(j)-\frac{1}{2}\sum \varepsilon_{x,c,j} L^G_{x,c}(j)
    =&\sum \alpha_{i,x,P_1,P_2}A_{i,x}B^{P_1,P_2}\\
    &+\sum \beta_{A_{x,\neq i}P_3,P_4}(A_{x,\neq i}(B^{P_3i,P_4}-B^{P_3,P_4i}))\\
    &+\sum \gamma_{A_{\neq x},P_3,P_4}(A_{\neq x}(B^{P_3x,P_4}-B^{P_3,P_4x}))
\end{align*}
where as earlier in this section, the coefficients involving $A_{x,\neq i}$ and $A_{\neq i}$ indices depend on the particular partition of the $A$ for that term and the sums run over all $A$s with the indicated restrictions on which vertices participate in their part of size 2 as well as over $P_3$, $P_4$.

We will do this one more time, picking a third vertex to remove from $G$, let's call it $y$. Then again we have a subgraph on $H_y$ on $m-1$ marked vertices, so again we use the inductive hypothesis.

\begin{align*}
    \sum \alpha_{\ell,k,P_1,P_2}A_{\ell,k}B^{P_1,P_2}-\frac{1}{2}\sum \varepsilon_{i,c,j} L^G_{i,c}(j)&-\frac{1}{2}\sum \varepsilon_{x,c,j} L^G_{x,c}(j)-\frac{1}{2}\sum \varepsilon_{y,c,j} L^G_{y,c}(j)\\
    =&\sum \beta_{x,y,P_3,P_4}(A_{x,y}(B^{P_3i,P_4}-B^{P_3,P_4i}))\\
    &+\sum \gamma_{A_{y, \neq x},P_3,P_4}(A_{y,\neq x}(B^{P_3x,P_4}-B^{P_3,P_4x}))\\
        &+\sum \delta_{A_{\neq y},P_3,P_4}(A_{\neq y}(B^{P_3y,P_4}-B^{P_3,P_4y}))
\end{align*}
with the dependence of the coefficients and ranges of the sums as described above.

This tells us that, with the exception of differences of pairs that have the same $A$ and a $B$ that is only different by which partition one vertex is in, our original sum of $AB$s can be expressed as sums of our $L$ identities. 

It only remains to show that the coefficients of these differences are zero. In Lemma \ref{zerocoeff}, we chose an arbitrary difference and showed that the coefficient of that difference was zero. The only assumption we made for Lemma \ref{zerocoeff} is that there are at least $5$ marked vertices in $G$. Because our base case has $4$ marked vertices, we can safely assume the inductive hypothesis for a $G$ of $5$ marked vertices, so the lemma holds. As a result, we can update our last equation to read: $$\sum \alpha_{\ell,k,P_1,P_2}A_{\ell,k}B^{P_1,P_2}-\frac{1}{2}\sum \varepsilon_{i,c,j} L^G_{i,c}(j)-\frac{1}{2}\sum \varepsilon_{x,c,j} L^G_{x,c}(j)-\frac{1}{2}\sum \varepsilon_{y,c,j} L^G_{y,c}(j)=0.$$ Moving the $L$s over to the right hand side, we have successfully written our arbitrary identity as a linear combination of the column expansion identity-derived quadratic spanning forest identities.

\end{proof}

Note that we did not assume that the $AB$ pairs were non-forbidden pairs, but since $L$s only involve non-forbidden pairs, we get as a consequence that any sum of $AB$s which equals a multiple of $(1,1,\ldots)(1,2,\ldots, m)$ consists of only non-forbidden pairs.

\subsection{Proof of the conjecture}

Now we are ready to prove the conjecture.

\begin{theorem}
(Conjecture \ref{conj rephrased}).  The dimension of $X_m$ is $m(m-2)$ and there is at least one expression of the form $(1,1,\ldots, 1)(1,2,\ldots, m) = \sum_{i,j}\alpha_{i,j}A_iB_j$.
Consequently, (Conjecture \ref{VYConj}), the formulae for quadratic spanning forest identities of the $AB$ type on $m$ marked vertices have $m(m-2)$ free variables.  
\end{theorem}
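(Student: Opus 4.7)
The plan is to assemble the machinery already developed: Lemma~\ref{lem form} and Proposition~\ref{prop lin ind} hand us the ``furthermore'' and the lower bound on dimension essentially for free, while Theorem~\ref{finaltheorem} together with Lemma~\ref{row} and Theorem~\ref{monomial} supply the matching upper bound. Any single column expansion identity $L_{m,c}(j)$ is, by Lemma~\ref{lem form}, already an expression of the desired form $(1,1,\ldots,1)(1,2,\ldots,m)=\sum_{i,j}\alpha_{i,j}A_iB_j$. This settles the ``furthermore'' statement and reduces the counting of free variables to computing $\dim X_m$, since the affine set of such expressions is a translate of $X_m$ by any particular solution.

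For the lower bound $\dim X_m \geq m(m-2)$, I will simply cite Proposition~\ref{prop lin ind}, which already exhibits the $m(m-2)$ linearly independent elements $L_{m,i}(j)-L_{m,m}(m-1)$ of $X_m$ indexed by $1\leq i \leq m$ and $1\leq j \leq m-2$.

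The upper bound will be the crux. By Theorem~\ref{finaltheorem} applied with $\eta=0$, every element of $X_m$ can be written in $V_m$ as a combination $\sum \varepsilon_{r,c,j} L_{r,c}(j)$, and Lemma~\ref{row} lets me absorb the row index and take all $r=m$. Thus $X_m$ is contained in the subspace of $V_m$ spanned by $\{L_{m,c}(j)\}_{c\in[m],\,j\in[m-1]}$. Consider the linear map $\Phi\colon \mathbb{R}^{m(m-1)} \to V_m$ defined by $\Phi(\varepsilon) = \sum_{c,j}\varepsilon_{c,j} L_{m,c}(j)$. Because each $L_{m,c}(j)$ evaluates on every graph to the same polynomial $(1,1,\ldots,1)(1,2,\ldots,m)$ (Lemma~\ref{lem form}), $\Phi(\varepsilon)$ lies in $X_m$ if and only if $\sum_{c,j}\varepsilon_{c,j}=0$; so $X_m = \Phi(H)$, where $H$ is the codimension-one hyperplane of coefficient-sum-zero vectors. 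Now Theorem~\ref{monomial} produces $m-1$ linearly independent elements of $\ker \Phi \cap H$, namely the block-sum differences $\sum_j L_{m,c}(j) - \sum_j L_{m,c'}(j)$ for $c'$ ranging over $m-1$ columns distinct from some fixed reference $c$ (each such difference has coefficient sum zero, hence lies in $H$). Therefore $\dim X_m = \dim \Phi(H) = \dim H - \dim(H\cap \ker \Phi) \leq (m(m-1)-1)-(m-1) = m(m-2)$. Combined with the lower bound, $\dim X_m = m(m-2)$, from which the statement for Conjecture~\ref{VYConj} follows by the torsor remark above.

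The step I expect to be most delicate is the upper bound: it rests critically on Theorem~\ref{finaltheorem} producing genuine formal identities in $V_m$, rather than merely identities at the level of polynomial evaluation, and on Theorem~\ref{monomial} accounting for \emph{all} the relations among the $L_{m,c}(j)$ that survive inside $H$. The fact that the block-sum kernel has dimension exactly $m-1$ inside $H$ (rather than larger) is guaranteed a posteriori by Proposition~\ref{prop lin ind}, which pincers $\dim X_m$ from both sides and thereby confirms that no hidden relations among the column expansion identities have been missed.
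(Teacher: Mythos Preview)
Your proposal is correct and takes essentially the same approach as the paper: cite Lemma~\ref{lem form} for the existence of one identity, cite Proposition~\ref{prop lin ind} for the lower bound, invoke Theorem~\ref{finaltheorem} to write any element of $X_m$ as a combination of $L$'s, argue the coefficients must sum to zero (lest $(1,1,\ldots,1)(1,2,\ldots,m)$ vanish on every graph), and use the block-sum relations from Theorem~\ref{monomial} to cut the remaining span down by $m-1$. Your packaging via the map $\Phi$ and the hyperplane $H$ is a slightly cleaner linear-algebra formulation of exactly what the paper does when it replaces each $L_{r,c}(j)$ by $L_{r,c}(j)-L_{m,m}(m-1)$ and then eliminates the $m-1$ block-redundant differences, but the substance is the same.
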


\begin{proof}
Each $\mathcal{L}$ is an expression of the form $(1,1,\ldots, 1)(1,2,\ldots, m) = \sum_{i,j}\alpha_{i,j}A_iB_j$.  Proposition~\ref{prop lin ind} gives a linearly independent set in $X_m$ of size $m(m-2)$.

We need only to show that this linearly independent set spans $X_m$.  From Theorem \ref{finaltheorem} we know that any sum of $AB$ pairs which is equal to a multiple of $(1,1,\ldots,1)(1,2,\ldots,m)$ can be expressed as a sum of $L$s.
Consider now an element $x\in X_m$.  $x$ is a sum of $AB$ pairs which is 0 on every graph.  In particular this sum is 0 modulo $(1,1,\ldots)(1,2,\ldots)$ and so by Theorem \ref{finaltheorem} can be written as a sum of $L_{r,c}(j)$.  If this sum of $L_{r,c}(j)$ does not have equal number of $L$s with positive and negative signs then using each $\mathcal{L}_{r,c}(j)$ we would get that this sum is a nonzero multiple of $(1,1,\ldots,1)(1,2,\ldots,m)$, but this sum is also $0$ by the original assumption on $x$.  Thus we would have that a nonzero multiple of $(1,1,\ldots, 1)(1,2,\ldots, m)$ is $0$ on all graphs, which is a contradiction, so the sum of the coefficients of the $L$s must be $0$.  
Therefore each $L_{r,c}(j)$ can be replaced by $L_{r,c}(j) - L_{m, m}(m-1)$ and so $x$ has been expressed in terms of the set from Proposition~\ref{prop lin ind}.  Since this holds for any $x\in X_m$, the set is a basis and the theorem is proved.
\end{proof}

\section{Conclusion}

Inspired by quantum field theory calculations, we were particularly interested in quadratic spanning forest polynomials identities.  The column expansion identities of determinants give a source of quadratic spanning forest polynomial identities and allowed us to prove a conjecture of \cite{vlasev}.  In brief, the $m(m-1)$ column expansion identities come in $m$ blocks each with the same sum, giving $m-1$ internal relations between them and homogenizing uses one further identity, leaving the expected dimension of $m(m-2)$ for the spanning forest identities of $AB$ form.
Furthermore, we give a combinatorial interpretation of all such spanning forest identities via an edge-swapping argument developed by the first author in \cite{dennis}.

We did not consider spanning forest identities which were quadratic but where the partitions involved had other numbers of parts.  These would also be useful in quantum field theory calculations and would be a good subject for future investigations.

\bibliographystyle{plain}
\bibliography{refs}

\end{document}